	\newcommand\eq[1] {(\ref{#1})}
	\newcommand\fig[1] {\ref{fig:#1}}
	\newcommand\labfig[1] {\label{fig:#1}}
	\newtheorem{lemma}{Lemma}[section]
	\newcommand{\bfm}[1]{\mbox{\boldmath ${#1}$}}
	\newcommand{\nonum}{\nonumber \\}
	\newcommand{\beqa}{\begin{eqnarray}}
	\newcommand{\eeqa}[1]{\label{#1}\end{eqnarray}}
	\newcommand{\beq}{\begin{equation}}
	\newcommand{\eeq}[1]{\label{#1}\end{equation}}
	\newcommand{\bpm}{\begin{pmatrix}}
	\newcommand{\epm}{\end{pmatrix}}
	\newcommand{\Grad}{\nabla}
	\newcommand{\Div}{\nabla \cdot}
	\newcommand{\Curl}{\nabla \times}
	\newcommand{\Imag}{\mathop{\rm Im}\nolimits}
	\newcommand{\Tr}{\mathop{\rm Tr}\nolimits}
	\newcommand{\lang}{\langle}
	\newcommand{\rang}{\rangle}
	\newcommand{\Md}{\partial}
	\newcommand{\Ga}{\alpha}
	\newcommand{\Gb}{\beta}
	\newcommand{\Gd}{\delta}
	\newcommand{\Ge}{\epsilon}
	\newcommand{\Gg}{\gamma}
	\newcommand{\Gk}{\kappa}
	\newcommand{\Gl}{\lambda}
	\newcommand{\Gt}{\theta}
	\newcommand{\Gr}{\rho}
	\newcommand{\Gs}{\sigma}
	\newcommand{\Go}{\omega}
	\newcommand{\GO}{\Omega}
	\newcommand{\GY}{\Psi}
	\newcommand{\BGa}{\bfm\alpha}
	\newcommand{\BGb}{\bfm\beta}
	\newcommand{\BGe}{\bfm\epsilon}
	\newcommand{\BGve}{\bfm\varepsilon}
	\newcommand{\BGf}{\bfm\phi}
	\newcommand{\BGm}{\bfm\mu}
	\newcommand{\BGr}{\bfm\rho}
	\newcommand{\BGs}{\bfm\sigma}
	\newcommand{\BGj}{\bfm\tau}
	\newcommand{\BGG}{\bfm\Gamma}
	\newcommand{\BGL}{\bfm\Lambda}
	\newcommand{\BGY}{\bfm\Psi}
	\newcommand{\CA}{{\cal A}}
	\newcommand{\CB}{{\cal B}}
	\newcommand{\CC}{{\cal C}}
	\newcommand{\CD}{{\cal D}}
	\newcommand{\CE}{{\cal E}}
	\newcommand{\CG}{{\cal G}}
	\newcommand{\CH}{{\cal H}}
	\newcommand{\CJ}{{\cal J}}
	\newcommand{\CK}{{\cal K}}
	\newcommand{\CM}{{\cal M}}
	\newcommand{\CQ}{{\cal Q}}
	\newcommand{\CR}{{\cal R}}
	\newcommand{\CS}{{\cal S}}
	\newcommand{\CT}{{\cal T}}
	\newcommand{\CU}{{\cal U}}
	\newcommand{\CW}{{\cal W}}
	\newcommand{\BCC}{{\bfm{\cal C}}}
	\newcommand{\BCD}{{\bfm{\cal D}}}
	\newcommand{\BCS}{{\bfm{\cal S}}}
	\def\Ba{{\bf a}}
	\def\Bb{{\bf b}}
	\def\Bd{{\bf d}}
	\def\Be{{\bf e}}
	\def\Bf{{\bf f}}
	\def\Bg{{\bf g}}
	\def\Bh{{\bf h}}
	\def\Bj{{\bf j}}
	\def\Bk{{\bf k}}
	\def\Bm{{\bf m}}
	\def\Bn{{\bf n}}
	\def\Bq{{\bf q}}
	\def\Bs{{\bf s}}
	\def\Bu{{\bf u}}
	\def\Bv{{\bf v}}
	\def\Bw{{\bf w}}
	\def\Bx{{\bf x}}
	\def\By{{\bf y}}
	\def\BA{{\bf A}}
	\def\BB{{\bf B}}
	\def\BC{{\bf C}}
	\def\BD{{\bf D}}
	\def\BE{{\bf E}}
	\def\BF{{\bf F}}
	\def\BG{{\bf G}}
	\def\BI{{\bf I}}
	\def\BJ{{\bf J}}
	\def\BK{{\bf K}}
	\def\BL{{\bf L}}
	\def\BM{{\bf M}}
	\def\BN{{\bf N}}
	\def\BP{{\bf P}}
	\def\BQ{{\bf Q}}
	\def\BR{{\bf R}}
	\def\BS{{\bf S}}
	\def\BT{{\bf T}}
	\def\BU{{\bf U}}
	\def\BW{{\bf W}}
	\def\BZ{{\bf Z}}
	\def\B0{{\bf 0}}
	\def \RR {{\mathbb R}}
	\def \ba {\begin{array}}
	\def \ea {\end{array}}
	\newtheorem {Thm} {Theorem} [section]
	\newtheorem {Adef} [Thm] {Definition}
	\newtheorem {Arem} [Thm] {Remark}
	\newtheorem {Aexa} [Thm] {Example}
	\newtheorem {Anot} [Thm] {Notation}
	\def \refe #1.{(\ref{#1})}
	\def \reff #1.{figure~\ref{#1}}
	\def \refs #1.{section~\ref{#1}}
	\def \refss #1.{subsection~\ref{#1}}
	\def \refD #1.{Definition~\ref{#1}}
	\def \refT #1.{Theorem~\ref{#1}}
	\def \refL #1.{Lemma~\ref{#1}}
	\def \refC #1.{Corollary~\ref{#1}}
	\def \refP #1.{Proposition~\ref{#1}}
	\def \refR #1.{Remark~\ref{#1}}
	\def \refE #1.{Example~\ref{#1}}
	\def \refN #1.{Notation~\ref{#1}}
	\newif\ifPDF
\newcommand{\aj}[1]{\textcolor{black}{#1}}
\newcommand{\ajj}[1]{\textcolor{black}{#1}}
\newcommand{\ay}[1]{\textcolor{black}{#1}}
\newcommand{\ayy}[1]{\textcolor{black}{#1}}
	\newenvironment{keywords}
	{\noindent{\bf Key words.}\small}{\par\vspace{1ex}}
	\newenvironment{AMS}
	{\noindent{\bf AMS subject classifications 2000.}\small}{\par}
	\title{Exact relations for Green's functions in linear PDE and boundary field equalities: a generalization of conservation laws}
	\author{
	    Graeme W. Milton\thanks{Department of Mathematics, University of Utah, e-mail: milton@math.utah.edu}\;\;, \;Daniel Onofrei\thanks{
	    	        Department of Mathematics,
	    	        University of Houston, TX, e-mail: onofrei@math.uh.edu}}
\begin{document}
	
	\maketitle
	
	\tableofcontents
	
	
	\begin{abstract}
Many physical equations have the form $\BJ(\Bx)=\BL(\Bx)\BE(\Bx)-\Bh(\Bx)$ with source $\Bh(\Bx)$ and fields $\BE$ and $\BJ$ satisfying differential constraints, 
symbolized by $\BE\in\cal E$, $\BJ\in\cal J$ where $\cal E$, $\cal J$ are orthogonal spaces. We show that if $\BL(\Bx)$ takes values in certain nonlinear 
manifolds $\cal M$, and coercivity and boundedness conditions hold, then the infinite body Green's function (fundamental solution) satisfies exact identities. 
The theory also links Green's functions of different problems. The analysis is based on the theory of exact relations for composites, but without assumptions 
about the length scales of variations in $\BL(\Bx)$, and more general equations, such as for waves in lossy media, are allowed. 
For bodies $\Omega$, inside which $\BL(\Bx)\in{\cal M}$, the "Dirichlet-to-Neumann map" (DtN map) giving the response also satisfies exact relations. These boundary 
field equalities generalize the notion of conservation laws: the field inside $\Omega$ satisfies certain constraints, that leave a wide choice in these fields, 
but which give identities satisfied by the boundary fields, and moreover provide constraints on the fields inside the body. A consequence is the following: 
if a matrix valued field $\BQ(\Bx)$ with divergence-free columns takes values within $\Omega$ in a set $\cal B$ (independent of $\Bx$) that lies on a nonlinear manifold, 
we find conditions on the manifold, and on $\cal B$, that with appropriate conditions on the boundary fluxes $\Bq(\Bx)=\Bn(\Bx)\cdot \BQ(\Bx)$ 
(where $\Bn(\Bx)$ is the outwards normal to $\partial\Omega$) force $\BQ(\Bx)$ within $\Omega$ to take values in a subspace $\cal D$. This forces $\Bq(\Bx)$ 
to take values in $\Bn(\Bx)\cdot\cal D$. We find there are additional divergence free fields inside $\Omega$ that in turn generate additional boundary field equalities. 
Consequently, there exist partial Null-Lagrangians, functionals $F(\Bw,\nabla \Bw)$ of a vector potential $\Bw$ and its gradient, that act as null-Lagrangians 
when $\nabla \Bw$ is constrained for $\Bx\in\Omega$ to take values in certain sets $\cal A$, of appropriate non-linear manifolds, and when $\Bw$ satisfies 
appropriate boundary conditions. The extension to certain non-linear minimization problems is also sketched.
	   
	\end{abstract}
	
	
	\begin{keywords}
	    Green's Functions, Exact Relations, Inverse Problems,  Boundary Field Equalities, Inhomogeneous Media
	\end{keywords}
	
	
	\begin{AMS}
	35J08, 35J25, 65N21
	\end{AMS}

	\section{Introduction}
\setcounter{equation}{0}
Many important linear equations of physics in an inhomogeneous medium of infinite extent in $\RR^d$
can be written as a system of second-order linear partial differential equations:
\beq \sum_{i=1}^d\frac{\Md}{\Md x_i}\left(\sum_{j=1}^d\sum_{\Gb=1}^mL_{i\Ga j\Gb}(\Bx)\frac{\Md u_\Gb(\Bx)}{\Md x_j}\right)=f_\Ga(\Bx), \quad\Ga=1,2,\ldots,m,\eeq{0.000}
for the $m$-component potential $\Bu(\Bx)$ given the $m$-component source term $\Bf(\Bx)$. If the integral of $\Bf(\Bx)$ over $\RR^d$ is zero, these can be reexpressed as 
\beq J_{i\Ga}(\Bx)=\sum_{j=1}^d\sum_{\Gb=1}^mL_{i\Ga j\Gb}(\Bx)E_{j\Gb}(\Bx)-h_{i\Ga}(\Bx),\quad E_{j\Gb}(\Bx)=\frac{\Md u_\Gb(\Bx)}{\Md x_j},\quad \sum_{i=1}^d\frac{\Md J_{i\Ga}(\Bx)}{\Md x_i}=0,
\eeq{b.0}
where, counter to the usual convention, we find it convenient to let the divergence act on the first index of $\BJ$, and to let the gradient in $\BE=\Grad\Bu$ be associated
with the first index of $\BE$, and $\Bh(\Bx)$ is chosen so
\beq \sum_{i=1}^d\frac{\Md h_{i\Ga}(\Bx)}{\Md x_i}=f_\Ga(\Bx). \eeq{b.00}
Assuming we are looking for solutions where $\BE(\Bx)$ and $\BJ(\Bx)$ are square-integrable \aj{in $\RR^d$}, integration by parts shows that
\beq \int_{\RR^d} \sum_{i=1}^d\sum_{\Ga=1}^m J_{i\Ga}(\Bx)E_{i\Ga}(\Bx)\,d\Bx=0. \eeq{b.000}
Thus $\BE(\Bx)$ and $\BJ(\Bx)$ belong to orthogonal spaces: $\CE$ the set of square-integrable fields $\BE(\Bx)$ such that $\BE=\Grad\Bu$ 
for some $m$ component potential $\Bu$, and $\CJ$ the set of square-integrable fields $\BJ(\Bx)$ such that $\Div\BJ=0$. With these definitions, the equations \eq{b.0}
take the equivalent, more abstract, form 
\beq \BJ(\Bx)=\BL(\Bx)\BE(\Bx)-\Bh(\Bx),\quad \BJ\in\CJ,\quad \BE\in\CE,\quad \Bh\in\CH.
\eeq{I.1}
where $\CH=\CE\oplus\CJ$ consists of square integrable $d\times m$ matrix-valued fields.
When $m=1$ we can interpret these equations as conductivity equations, with $\BJ$ as the current field, $\Bf=\Div\Bh$ as a source of current, $\BE=-\Grad V$
as the electric field, and $V=-u$ as the electrical potential. Then the $\Gs_{ij}(\Bx)=L_{i1j1}(\Bx)$ are the elements of the conductivity tensor field $\BGs(\Bx)$.

\aj{As shown in the appendix (see also \cite{Milton:2002:TOC} Chap.2, \cite{Milton:2016:ETC} Chap.1, \cite{Milton:2013:SIG,Milton:2015:ATS} and the appendix of \cite{Milton:2018:NRF})}
this structure \eq{I.1} is suitable for a multitude of 
additional linear physical equations too,
including wave equations. They can be formulated in a Hilbert space $\CH$ of
square integrable fields \aj{in $\RR^d$} taking values in some tensor space $\CT$, where $\CH$ can be split into two orthogonal subpaces
$\CE$ and $\CJ$, i.e., $\CH=\CE\oplus\CJ$. This splitting is typically such that the operator $\BGG_1$ that projects onto $\CE$
is local in Fourier space, i.e., if $\BE=\BGG_1\BA$ then the Fourier components $\widehat{\BE}(\Bk)$ and $\widehat{\BA}(\Bk)$
of $\BE$ and $\BA$ are related via $\widehat{\BE}(\Bk)=\BGG_1(\Bk)\widehat{\BA}(\Bk)$ for some operator $\BGG_1(\Bk)$ that projects onto a subspace $\CE_{\Bk}\subset\CT$.
\aj{The appendix gives examples of fields} $\BE\in\CE$ having some components, that beside those involving $\Grad\Bu$, just involve $\Bu$ alone: examples are the acoustic,
electrodynamic, or elastodynamic wave equations in possibly lossy (energy absorbing) media \cite{Milton:2009:MVP,Milton:2010:MVP,Milton:2016:ETC} at constant (possibly complex)
frequency). The lossy nature of the moduli at constant frequency ensures the coercivity we need for well posedness.
 Also the fields $\BE\in\CE$ could have higher order gradients:
a classic example is the Kirchoff plate equation  (see, e.g.,\cite{Milton:2002:TOC} Sect. 2.3, and references therein)
where one takes $\BE(\Bx)$ to be the \ayy{linearized} plate curvature $\BE=\Grad\Grad u$, in which $u(\Bx)$ is the (infinitesimal)
vertical deflection of the plate. Furthermore, some components of the fields $\BE\in\CE$ are not necessarily derivatives of potentials but could also involve, say,
divergence-free vector fields (and the corresponding components of the fields $\BJ\in\CJ$ would then be gradients of potentials). Such mixed formulations
are useful in quasistatic and wave equations in lossy media when one wants to reformulate the problem so that $\BL(\Bx)$  is real and positive definite
\cite{Cherkaev:1994:VPC,Milton:2009:MVP,Milton:2010:MVP,Milton:2016:ETC}. \aj{Again, examples are given in the appendix.}
 
To begin we are considering an inhomogeneous medium of infinite extent in $\RR^d$,
where the material moduli are contained in a tensor $\BL(\Bx)$. 
Some type of boundedness and coercivity constraints on $\BL(\Bx)$ are usually needed to ensure that the equations \eq{I.1} always have a  unique {\color{black}{weak}}
solution for $\BE(\Bx)$ (and hence $\BJ(\Bx)$) for any given source field $\Bh(\Bx)$ with say compact support. {\color{black} {The concept of weak solution implies that, further throughout the paper, unless more regularity is specified, the tangential component of the $\BE(\Bx)$ along the boundary can be viewed in the $H^{\frac{1}{2}}$ sense and the normal component of $\BJ(\Bx)$ over the boundary in the $H^{-\frac{1}{2}}$ sense. }}

Then 
in the governing equations \eqref{I.1} the uniquely determined field $\BE$ depends linearly on the source term $\Bh$. \aj{So assuming
$\BE$ depends continuously on $\Bh$, as \ajj{it} should in any physical problem of interest,
the Schwartz kernel theorem implies that we can informally write}
     \begin{equation}
    \label{12.1}
     \BE(\Bx)=\int_{\mathbb{R}^d}\BG(\Bx,\Bx')\Bh(\Bx')~d\Bx',
    \end{equation}
where the integral kernel $\BG(\Bx,\Bx')$ (possibly a generalized function) is the Green's function for the problem, that depends on both $\Bx$ and $\Bx'$, 
and not just on $\Bx-\Bx'$, because the medium is inhomogeneous. 
\ayy{
\begin{Arem}
	\label{smooth-assumption}
	In general the existence of a continuous Green Function for problem \eqref{I.1} may be difficult to prove. Thus, for the sake of clarity of exposition, further in the paper $\delta$ will denote a smooth approximation of the Dirac delta distribution and $\BG$ will denote an approximate Green function (still called the Green's function), i.e. the solution of problem \eqref{I.1} with source $\delta$ (i.e., a smooth approximation of the Dirac delta distribution).
\end{Arem}
}
The Green's function can also be considered as a linear map $\BG:\CH \rightarrow \CH$, or more specifically, $\BG:\CH  \rightarrow \CE$.
The main objective in this paper is to show that when $\BL(\Bx)$ is constrained to take values in certain non-linear manifolds $\CM$
then the Green's function kernel $\BG(\Bx,\Bx')$ satisfies some exact identities for every $\Bx\ne\Bx'$. The manifold $\CM$, with dimension $k_0$, need not have
codimension one. 

When we say $\CT$ is a tensor space we mean that there is a natural inner product $\lang\BA,\BB\rang_\CT$ between any $\BA, \BB\in\CT$ and
for every $d$-dimensional rotation $\BR$ there exists an associated linear operator $\BQ(\BR)$ acting on $\CT$ such that $\lang\BQ(\BR)\BA,\BQ(\BR)\BB\rang_\CT=\lang\BA,\BB\rang_\CT$
for all $\BA, \BB\in\CT$. Thus, for example, $\CT$ could consist of vectors that have a combination of scalars, vectors, second-order tensors, or higher order tensors
(or even tensors of ``half integer'' order, like spins in quantum mechanics) as elements. However, the tensorial nature of $\CT$ is rather moot in this paper as we are not 
concerned with the action of rotations on elements of $\CT$. Indeed, \eq{b.0} with $m=d$ could be regarded as the linear elasticity equations with $L_{i\Ga j\Gb}$ being 
the 4th-order elasticity tensor $C_{i\Ga j\Gb}$ (that annihilates any antisymmetric component of $\BE$) 
and with $\BJ(\Bx)$ and $\BE(\Bx)$ being the second order tensor stress and displacement gradient fields. But mathematically
it is the same problem when the components of $\Bu$ represent different physical scalar potentials, e.g., such as temperature, electrical potential, 
and pressure, that are invariant under rotations. Then for any $\Bx$, $\BJ(\Bx)$ and $\BE(\Bx)$ are not second order tensors, but rather triplets of vector fields. 

Our paper also presents a broad theory of boundary field equalities that generalize the notion of a conservation law. \aj{These boundary field equalities imply, for example,
that the "Dirichlet-to-Neumann map" (DtN map) governing the response of inhomogeneous bodies satisfies certain exact identities when the tensor field $\BL(\Bx)$ inside $\GO$ takes values in 
a certain nonlinear manifold $\CM$. These identities generalize the  exact identities satisfied by the effective tensor $\BL_*$ in the theory of exact relations for composites
when $\BL(\Bx)$ inside the period cell takes values in $\CM$.}

The classic conservation law says that if a vector field $\Bu(\Bx)$ with $d$ components satisfies
$\Div\Bu=0$ inside a body $\GO$, then the integral of $\Bu\cdot\Bn$ over the 
surface $\partial\GO$ of $\GO$ is zero: here $\Bn$ denotes the outward normal of $\GO$. This naturally leads
to the question: can one make other assumptions about the fields inside a body (still leaving many degrees of freedom in the choice of these fields) that imply exact ``boundary field equalities'' among the fields at the boundary for suitable boundary conditions? \aj{Of course, these boundary conditions
should not be such that they trivially imply the boundary field equalities, independent of any assumption about the fields inside the body. \it{We emphasize that, in general, our boundary field equalities do not result from
integration by parts, but rather arise through algebraic properties of the underlying operators. Thus it is an entirely new idea to obtaining identities satisfied by the boundary fields.}}

A divergence-free field satisfies a differential constraint, but additional
algebraic constraints are also possible. An example of the latter type of boundary field equality, discussed in \cite{Milton:2016:ETC} Sect. 1.5,
and implicit in the work of Milgrom \cite{Milgrom:1990:LRG} (see also \cite{Milton:2002:TOC} Chap. 6, and references therein) is the following one. Consider in a body $\GO$
the \aj{primary} equations \eq{b.0} with no source term, i.e., $h_{ij}(\Bx)=0$ for all $i,j$, and with $L_{i\Ga j\Gb}(\Bx)=\Gd_{ij}A_{\Ga\Gb}(\Bx)$.
Assume that $\GO$ contains just two phases in any configuration where the $m\times m$ matrix valued field $\BA(\Bx)$ takes
 the value $\BA^{(1)}$ in phase 1 and $\BA^{(2)}$ in phase 2, in which $\BA^{(1)}$ and $\BA^{(2)}$ are real, symmetric, positive definite matrices.
Associated with this problem are boundary fields: the vector potential $\Bu(\Bx)$ on $\Md\GO$ (that may be obtained by integrating over the surface
the tangential values of $\BE(\Bx)$) and the vector-valued flux $\Bn\cdot\BJ$. The key observation is that there exists a congruence transformation that
simultaneously diagonalizes $\BA^{(1)}$ and $\BA^{(2)}$, i.e. a matrix $\BW$ such that $\BW\BA^{(1)}\BW^{T}$ and $\BW\BA^{(2)}\BW^{T}$ are simultaneously
diagonal. To do this we choose $\BW=\BQ(\BA^{(2)})^{-1/2}$ where $\BQ$ satisfying $\BQ\BQ^T=\BI$ is taken to 
diagonalize $(\BA^{(2)})^{-1/2}\BA^{(1)}(\BA^{(2)})^{-1/2}$. Then one obtains an equivalent set of decoupled conductivity equations:
\beq \widetilde{J}_{i\Ga}(\Bx)=\Gs_{\Ga}(\Bx)\widetilde{E}_{i\Ga}(\Bx),\quad \widetilde{E}_{i\Ga}(\Bx)=\frac{\Md \widetilde{u}_\Ga(\Bx)}{\Md x_i},\quad \sum_{i=1}^d\frac{\Md \widetilde{J}_{i\Ga}(\Bx)}{\Md x_i}=0,
\eeq{b.0a}
indexed by $\Ga=1,2,\ldots,m$, with 
\beqa \widetilde{\BJ}(\Bx)& = & \BJ(\Bx)\BW^T,\quad \widetilde{\BE}(\Bx)=\BE(\Bx)\BW^{-1}, \quad \widetilde{\Bu}(\Bx)=(\BW^T)^{-1}\Bu(\Bx),\nonum
\Gs_\Ga(\Bx) &= & \{\BQ(\BA^{(2)})^{-1/2}\BA^{(1)}(\BA^{(2)})^{-1/2}\BQ^T\}_{\Ga\Ga} \mbox{ for $\Bx$ in phase 1 }, \nonum
             &= & 1 \mbox{ for $\Bx$ in phase 2}.
\eeqa{b.0b}
It is then clear that if the boundary values of $\Bu(\Bx)$ are prescribed so that $\widetilde{\Bu}(\Bx)=(\BW^T)^{-1}\Bu(\Bx)$ only has one non-zero component, then certainly
the flux $\Bn\cdot\widetilde{\BJ}(\Bx)=\Bn\cdot\BJ(\Bx)\BW^T$ will only have one matching non-zero component. In other words 
the flux $\Bn\cdot\BJ(\Bx)$ is of the form $\Ga(\Bx)\Bv$ where only the scalar $\Ga(\Bx)$ varies on the surface of $\GO$. These constraints on the flux $\Bn\cdot\BJ(\Bx)$
for the prescribed $\Bu(\Bx)$ are an example of a boundary field equality. Another example of a boundary field equality is that given in \cite{Thaler:2014:EDV}
for an elastic body containing two isotropic elastic phases having the same shear modulus, where the boundary field equality involves the volume fraction
(and thus may be used in an inverse way to determine this volume fraction). 

These simple examples serve to give an idea of boundary field equalities, but the general theory developed here goes far beyond them.
Rather than say considering inside $\GO$ the constitutive law $\BJ(\Bx)=\BL(\Bx)\BE(\Bx)$ with the constraint that $\BL(\Bx)\in\CM$, 
we may eliminate $\BL(\Bx)$ and just view these relations as a non-linear local constraint on the fields. Then, 
we obtain results like the following. Suppose we are given a $d\times s$ 
matrix valued field $\BQ(\Bx)$ such that $\Div\BQ=0$ (so each column of $\BQ(\Bx)$ represents a single divergence-free vector field). 
Let  $\Bq(\Bx)=\Bn(\Bx)\cdot\BQ(\Bx)$ be the associated $s$-component flux $\Bq(\Bx)=\Bn(\Bx)\cdot\BQ(\Bx)$ at the boundary of $\GO$ (where $\Bn(\Bx)$ is the outwards normal on $\Md\GO$). With $\BQ(\Bx)$ constrained to take values in a
subset $\CB$ of some $r$-dimensional non-linear manifold (where $\CB$ does not depend on $\Bx$), we find conditions on $\CB$ and
non-local linear constraints on the boundary flux $\Bq$ (specified in Section 8) which forces the non-uniquely determined field $\BQ(\Bx)$ inside $\GO$ to lie in a subspace $\CD$,
and hence which forces $\Bq(\Bx)$ to lie in $\Bn(\Bx)\cdot\CD$ (where $\Bn(\Bx)\cdot\CD$ is obtained by applying
$\Bn(\Bx)\cdot$ to each element of $\CD$). \aj{Of course, it should not be the case that $\CB\subset\CD$, since otherwise the result would be trivial.}

Alternatively, we may express $\BQ$ in terms of the elements gradient of $t$-component potential $\Bw$, and we obtain results like the following. 
Suppose $\Grad\Bw(\Bx)\in\CA$ for all $\Bx$, where $\CA$ is a subset of a non-linear manifold (that does not depend on $\Bx$),
then we find conditions on $\CA$ and non-local linear constraints on the surface potential $\Bw(\Bx)$, $\Bx\in\Md\GO$,  (specified in Section 8)
which forces the non-uniquely determined field $\Grad\Bw(\Bx)$ inside $\GO$ to lie in a subspace $\CC$ , and this then places restrictions
on the tangential derivatives of the surface potential.  \aj{Again, it should not be the case that $\CA\subset\CC$, since otherwise the result would be trivial.}
To better understand the significance of the constraint that inside $\GO$, 
$\Grad\Bw(\Bx)\in\CC$, let $\BN$ be any $d\times  t$ matrix normal to the space $\CC$ ,
i.e. such that $\Tr(\BC\BN^T)=0$ for all $\BC\in\CC$ . Then $\Tr[\Grad\Bw(\Bx)\BN^T]=0$ which implies $\Div[\BN\Bw(\Bx)]=0$, i.e.
$\BN\Bw(\Bx)$ is a divergence-free field.  We deduce that
\beq \int_{\Md\GO}\Bn(\Bx)\cdot[\BN\Bw(\Bx)]~dS=0,
\eeq{b.0e}
which implies the additional boundary field equality: 
\beq \int_{\Md\GO}\Bn(\Bx)\otimes\Bw(\Bx)~dS\in \CC.
\eeq{b.0f} 
An explanation of why there exist such subsets $\CB$ and $\CA$ of non-linear manifolds with this property, and a prescription for obtaining them and 
the appropriate boundary conditions, will be given in Section 8.

\ayy{Somewhat related questions have been the focus of attention in the homogenization community. Luc Tartar \cite{Tartar:1979:CCA}
raised (in a more general setting) essentially this fundamental question: if one has a sequence of fields $\Bu_\Ge(\Bx)$ such that $\Grad\Bu_\Ge(\Bx)$ 
takes values in a set $\CA$,
then what is the range of values that the weak limits of $\Grad\Bu_\Ge(\Bx)$ can take? (Alternatively, if one has a sequence of
fluxes $\BQ_\Ge(\Bx)$ such that $\Div\BQ_\Ge(\Bx)=0$ and $\BQ_\Ge(\Bx)$ takes values in a set $\CB$ and converges weakly to $\BQ_0(\Bx)$,
then what is the range of values that $\BQ_0(\Bx)$ can take?).
 For a sample of work addressing this thorny problem, see, for example, the papers \cite{Zhang:1998:SQH,Muller:2003:CIL,Faraco:2008:TCL} 
and references therein. 
Here we are tackling the question of whether, for certain sets $\CA$, one can deduce additional constraints on each 
element $\Grad\Bu_\Ge(\Bx)$ in the entire sequence,
namely that $\Grad\Bu_\Ge(\Bx)\in\CC$, when $\Bu_\Ge(\Bx)$ satisfies appropriate boundary conditions.
This is important as  the homogenization approach is not suitable in applications where there is no separation of length scales. In cases 
where it is applicable we deduce the non-trivial result that the weak limit of $\Grad\Bu_\Ge(\Bx)$ lies in $\CC$ (that is, essentially, a corollary of the theory
of exact relations for composites). We emphasize that our boundary field equalities 
are generally not simply an application of integration by parts, not even at a qualitative level. Rather they follow from algebraic identities.}

We mention too, that beyond boundary field equalities there are boundary field inequalities
and some of these go beyond just using convexity and the divergence theorem \cite{Milton:2013:SIG,Milton:2015:ATS}.

\ayy{Although we do not explicitly address this in the paper, we mention here that} our results on boundary field equalities enable us to introduce and give examples of what we call ``partial null-Lagrangians''. Null Lagrangians include, for example,
 functions $F(\Bx,\Bw,\Grad\Bw)$ for which the corresponding integral
\beq \CW(\Bw)=\int_\GO F(\Bx,\Bw(\Bx),\Grad\Bw(\Bx))~d\Bx,
\eeq{b.0g}
has the property that $\CW(\Bw_0+\BGf)=\CW(\Bw_0)$ for any choice of $\BGf\in C^\infty_0(\GO)$ and for any choice of $\Bw_0\in C^1(\overline{\GO})$. Null Lagrangians 
of this form have been completely characterized by Olver and Sivaloganathan \cite{Olver:1988:SNL}. It is well known (see the references in \cite{Olver:1988:SNL})
that when $F(\Bx,\Bw,\Grad\Bw)=\GY(\Grad\Bw)$, then $\GY(\Grad\Bw)$ is a null-Lagrangian if and only if it is an affine combination of subdeterminants of $\Grad\Bw$
of all orders. Other classes of null-Lagrangian have been characterized by Murat \cite{Murat:1978:CPC,Murat:1981:CPC,Murat:1987:SCC} 
(see also Pedregal \cite{Pedregal:1989:WCW}).
Null-Lagrangians are also instrumental in the construction of polyconvex functions and play a fundamental role in the calculus of variations and in establishing the existence and uniqueness of minimizers to large classes
of "energy functions" (see, for example, \cite{Ball:1977:CCE}, the recent review \cite{Benesova:2017:WLS}, and references therein). Additionally they are
an important tool for establishing
bounds on the effective moduli of composite materials, through the ``translation method'', or equivalently, the method of ``compensated compactness'' 
(as summarized in the books \cite{Cherkaev:2000:VMS,Torquato:2001:RHM,Milton:2002:TOC,Allaire:2002:SOH,Tartar:2009:GTH}). 
In the liquid crystal community
it is well known that if a $t$-component vector field $\Bw(\Bx)$ takes values in $\CA$, where $\CA$ consists of vectors of unit length, so that $|\Bw(\Bx)|=1$,
then $(\Grad\Bw)\cdot\Bw=\Grad(\Bw\cdot\Bw)=0$. For any given $d$-component vector field $\Ba(\Bx)$, the function $F(\Bx,\Bw,\Grad\Bw)=\Ba(\Bx)\cdot(\Grad\Bw)\cdot\Bw$ 
is then an example of what we call a partial null-Lagrangian: its integral can be exactly computed under the constraint
that $\Bw(\Bx)\in\CA$ for all $\Bx\in\GO$ once one knows the boundary fields (and, in this example, the integral is zero and independent of the boundary fields). 

More generally,
we call $F(\Bx,\Bw,\Grad\Bw)$ a partial null-Lagrangian on a subset $\CA$ (independent of $\Bx$)  of a nonlinear manifold
in the space of pairs of $t$-component vectors and $d\times t$ matrices, if for every
$\BGf\in C^\infty_0$ and $\Bw_0\in C^1(\overline{\GO})$ satisfying 
\beq (\Bw_0(\Bx),\Grad\Bw_0(\Bx))\in \CA,\quad (\Bw_0(\Bx)+\BGf(\Bx),\Grad(\Bw_0(\Bx)+\BGf(\Bx)))\in\CA,\quad\forall \Bx\in \GO,
\eeq{b.0h}
and with the surface fields $\Bw_0(\Bx)$, $\Bx\in\Md\GO$, satisfying appropriate non-local boundary conditions, one has
$\CW(\Bw_0+\BGf)=\CW(\Bw_0)$.
With $\CA$, and the boundary conditions on $\Bw_0$ chosen so it forces $\Grad(\Bw_0+\BGf)$ to lie in a subspace $\CC$, we obtain functions 
$F(\Bw,\Grad\Bw)$ that are partial null-Lagrangians, but not null-Lagrangians. Specifically, since
$\BN\Bw(\Bx)$ is a divergence-free field [see the text preceeding \eq{b.0e}], an obvious partial null-Lagrangian is any component of the $t$ component function,
\beq \BF(\Bw,\Grad\Bw)=[\BN\Bw]\cdot\Grad\Bw, \eeq{b.0i}
and we have
\beq  \int_\GO \BF(\Bw(\Bx),\Grad\Bw(\Bx))~d\Bx=\int_{\Md\GO}[\Bn(\Bx)\cdot\BN\Bw(\Bx)]\Bw(\Bx)~dS.
\eeq{b.0j}
In two dimensions, if $\BN_1$ and $\BN_2$ are two $2\times  t$ matrices normal to the space $\CC$, then the fact that $\Div[\BN_1\cdot\Bw(\Bx)]=0$ 
allows us to find a potential $W_1(\Bx)$ such that 
\beq \BR^\perp\BN_1\cdot\Bw(\Bx)=\Grad W_1, \quad \BR^\perp=\bpm 0 & 1 \cr -1 & 0 \epm.
\eeq{b.0k}
Furthermore, $\Bn(\Bx)\cdot[\BN_1\Bw(\Bx)]$ gives us the tangential derivatives of $\Grad W_1(\Bx)$ , which when integrated gives the surface
potential $W_1(\Bx)$, $\Bx\in\Md\GO$.  Then
\beq F(\Bx,\Bw,\Grad\Bw)=[\BN_2\Bw(\Bx)]\cdot\BR^\perp\BN_1\Bw(\Bx) \eeq{b.0l}
is a partial null-Lagrangian and we have
\beq  \int_\GO F(\Bw(\Bx),\Grad\Bw(\Bx))~d\Bx=\int_{\Md\GO}[\Bn(\Bx)\cdot\BN_2\Bw(\Bx)]W_1(\Bx)~dS.
\eeq{b0m}
We remark that, in the context of this paper, the constraint that $\BQ(\Bx)\in\CB$, or equivalently that $\Grad\Bw\in\CA$, is automatically satisfied when there are appropriate materials inside
$\GO$ with a bounded and coercive tensor field $\BL(\Bx)\in\CM$ for all $\Bx\in\GO$. Then, if the appropriate boundary conditions are satisfied, the partial null-Lagrangians place
integral constraints on the fields inside $\GO$.

As our work has as its basis the theory of exact relations for composite materials let us briefly review this.
\section{A brief review of exact relations in composites}
\setcounter{equation}{0}
In this setting, one typically starts with a tensor field $\BL(\By)$ that is periodic in $\By$
and which is a linear map from $\CT$ to $\CT$, where $\CT$ is some  $q$-dimensional \ayy{inner product space}. Here $\BL(\By)$
may represent the conductivity tensor, dielectric tensor, elasticity tensor, or a wealth of other physical tensor fields 
(see, for example, \cite{Milton:2002:TOC} Chap. 12).
In homogenization
theory one often considers a body $\GO$ filled by a material having tensor field $\BL(\Bx/\Ge)$ and in the limit $\Ge\to 0$ the body often responds 
to external fields (that are independent of $\Ge$)
as if it were filled with a homogeneous medium with tensor $\BL_*$ that is known as the effective tensor of the medium. In many problems the
problem of determining $\BL_*$ can be formulated as a problem in the abstract theory of composites. The setting is a Hilbert space $\CH$, say of
periodic fields that are square integrable in the unit cell of periodicity and which take values in $\CT$. 
It has a splitting into three orthogonal spaces $\CH=\CU\oplus\CE\oplus\CJ$. For example, in the conductivity problem $q=d$, 
$\CU$ is the space of $d$-dimensional vector fields that are constant (independent of $\By$, where $\By$ can be thought of as a microscale spatial
coordinate) and 
$\CE$ represents gradients of periodic scalar valued potentials, while $\BJ$ denotes those periodic fields that have zero divergence and zero
average value over the unit cell. To determine the effective tensor $\BL_*$ one prescribes a field $\BE_0\in\CU$ and solves the equations
\beq \BJ_0+\BJ=\BL(\BE_0+\BE), \mbox{ with }\BJ_0\in\CU,\quad\BE\in\CE,\quad\BJ\in\CJ, \eeq{0.0}
where the action of $\BL:\CH \rightarrow \CH$ is defined by $\BB=\BL\BA$ with $\BB(\By)=\BL(\By)\BA(\By)$ (i.e., $\BL$ acts locally in space).
Of course $\BL$ needs to be such that these equation have a unique solution for $\BE$ (hence uniquely giving $\BJ_0$ and $\BJ$) for any $\BE_0\in\CU$.
Clearly $\BJ_0$ depends linearly on $\BE_0$ and it is this linear relation that defines the effective tensor: $\BJ_0=\BL_*\BE_0$. This formulation
which stems from ideas in \cite{Kohler:1982:BEC,DellAntonio:1986:ATO} was crystallized in \cite{Milton:1987:MCEa,Milton:1990:CSP}, see also \cite{Milton:2002:TOC} Chap. 12.

In the field of composites there are a myriad of results on what are known as exact relations:
microstructure independent formulae satisfied by effective moduli. A canonical example is Dykhne's result \cite{Dykhne:1970:CTD} for 2-d 
conductivity that if the determinant of the local (anisotropic) conductivity tensor $\BL(\Bx)=\BGs(\Bx)$ is constant,
then the effective conductivity tensor $\BL_*=\BGs_*$ has the same determinant. 
More generally, as illustrated in Fig.\fig{1}, in the theory of exact relations, one wants to find non-linear manifolds $\CM$ (of dimension less than $q^2$)
in the space $L(\CT)$ of linear maps $\CT\to\CT$
such the effective tensor $\BL_*$ lies in $\CM$ whenever $\BL(\By)$ lies in $\CM$ for all $\By$ (and generally $\BL(\By)$ also satisfies some sort
of boundedness and coercivity properties necessary to ensure that $\BL_*$ exists and is unique). In the two-dimensional conductivity example
$\CM$ consists of $2\times 2$ matrices $\BGs$ such that $\det(\BGs)=c$, where $c$ is a constant parameterizing the manifold.
\begin{figure}[!ht]
\centering
\includegraphics[width=0.8\textwidth]{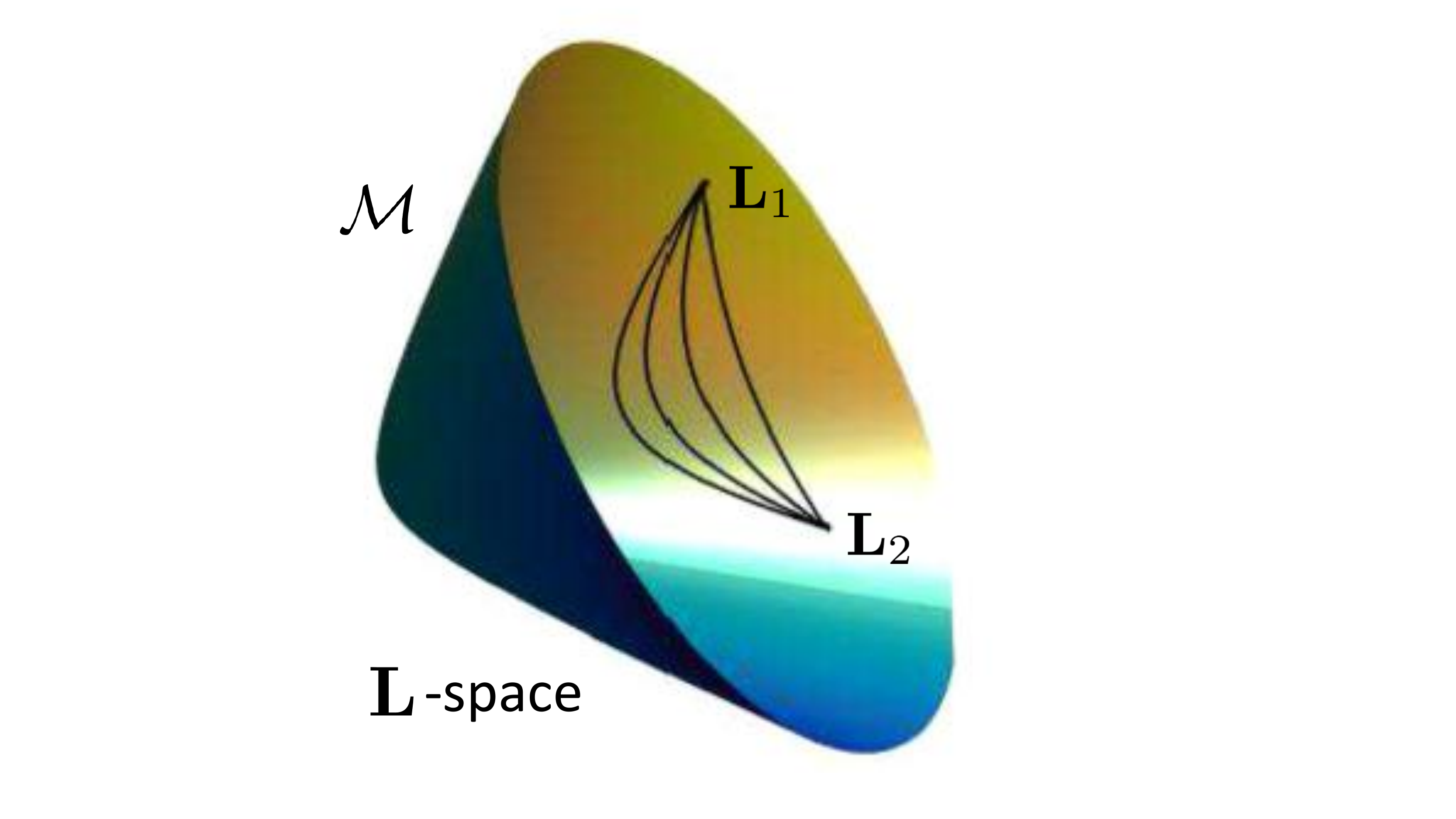}
\caption{\aj{The central goal of the general theory of exact relations for composites is to identify manifolds $\CM$ in tensor space such that if a periodic tensor field $\BL(\Bx)\in\CM$ for all $\Bx$,
(and $\BL(\Bx)$ satisfies boundedness and coercivity conditions that ensure the effective tensor $\BL_*$ exists), then necessarily $\BL_*\in\CM$. Such manifolds are stable under homogenization, 
and hence under lamination. In particular, if one takes two materials with tensors $\BL_1, \BL_2\in\CM$, and layers them together in direction $\Bn$ then the resultant effective tensor $\BL_*$ must also lie in the manifold $\CM$.
Varying the volume fractions occupied by the two materials gives a trajectory that must be confined to $\CM$. The figure shows four trajectories associated  with four different directions of lamination:
$\Bn=\Bn_1,\Bn_2,\Bn_3,$ and $\Bn_4$. \ay{The figure, adapted from figure 4.1 of \protect\cite{Grabovsky:2016:CMM}(``\copyright IOP Publishing. Reproduced with permission. All rights reserved'')}, shows the manifold $\CM$ associated with $2\times 2$ \ayy{symmetric} matrices having constant determinant, corresponding to the Dykhne  \protect\cite{Dykhne:1970:CTD} exact relation for two-dimensional conductivity.}}
\labfig{1}
\end{figure}

The general theory of exact relations was founded by Grabovsky \cite{Grabovsky:1998:EREa}, and his insight is summarized in Fig.\fig{2}).
He realized that if an exact relation held for all composites
then it must certainly hold for layered geometries. For lamination with a vector $\Bn$ perpendicular to the layers, so that $\BL(\By)$ is just a function of 
the single variable $\Bn\cdot\By$, it is convenient to introduce the fractional linear transformation   provided by \cite{Milton:1990:CSP,Zhikov:1991:EHM}
\beq W_{\Bn}(\BL)=[\BI+(\BL-\BL_0)\BGG(\Bn)]^{-1}(\BL-\BL_0),
\eeq{0.00aa}
in which $\BGG(\Bn)$ is a certain tensor dependent on $\BL_0$ and the lamination direction $\Bn$.  
$W_{\Bn}(\BL)$ applied to the local tensor field $\BL(\By)$ and effective tensor $\BL_*$
gives a new tensor field $\BK_\Bn(\By)=W_{\Bn}(\BL(\By))$ and $\BK^*_\Bn=W_{\Bn}(\BL_*)$ that are related simply by a linear average $\BK^*_\Bn=\langle\BK_\Bn\rangle$ (where the angular brackets
denote a volume average of $\BK_\Bn(\By)$ over its unit cell of periodicity). Thus the relation
\beq \BL_*=W_{\Bn}^{-1}(\langle W_{\Bn}(\BL)\rangle), \eeq{0.0a}
determines the effective tensor $\BL_*$. Furthermore one can always choose $W_{\Bn}$ so that $W_{\Bn}(\BL_0)=0$ for some $\BL_0\in\CM$.
(There are other linear lamination formula \cite{Backus:1962:LWE,Tartar:1979:ECH}, but it is unclear if the general theory of exact relations 
can be developed using them, or their generalizations).
Therefore the exact relation in these new
coordinates must be a linear relation: $\BK^*_\Bn\in\CK_\Bn$ when $\BK_\Bn(\By)\in\CK_\Bn$ where the tensor subspace $\CK_\Bn$ defines the exact relation: $\CM=W_{\Bn}^{-1}(\CK_\Bn)$.
Thus $\CK_\Bn$ has the same dimension $k_0$ as $\CM$.
As $\CK_\Bn$ remains linear as $\Bn$ is varied one sees that $\CK_\Bn$ must  have the property that $W_{\Bm}[W_{\Bn}^{-1}(\CK_\Bn)]=\CK_\Bm$ for all unit vectors $\Bm$ and $\Bn$.
As $W_{\Bm}[W_{\Bn}^{-1}(\cdot)]$ is a nonlinear transformation, the image of a linear subspace under the transformation is generally a ``curved'' manifold,
so $\CK_\Bn$ has to be rather special for its image to be a linear subspace rather than a ``curved'' manifold. Through perturbation analysis with $\Bm$ close to $\Bn$,
Grabovsky established that $\CK_\Bn$ must be independent of $\Bn$, $\CK_\Bn=\CK_0$ for all $\Bn$, and he established that $\CK_0$ must satisfy 
the algebraic constraint that for every unit vector $\Bm\in\RR^d$ one has 
\beq   \BB\BGY(\Bm)\BB\in\CK_0,~~{\rm for~all~}\BB \in\CK_0, \eeq{0.a}
where the left-hand side of \eq{0.a} is to be regarded
as the composition of three linear maps each mapping $\CT$ to $\CT$ (or as the product of three $q\times q$ matrices if one takes a basis in $\CT$ and
represents each map by a matrix acting on the basis element),
and $\BGY(\Bm):\CT\rightarrow\CT$ depends on $\BL_0$ and the differential constraints on the fields relevant to
the physical problem under consideration. Explicitly, $\BGY(\Bm)$ is given by
\beq \BGY(\Bm)=\lang\BGG(\Bn)\rang_\Bn-\BGG(\Bm) \eeq{0.aaa}
where the angular brackets $\lang\cdot\rang_\Bn$ denote a possibly weighted average over the sphere $|\Bn|=1$ (for example, 
one could take a weighting concentrated at $\Bn=\Bn_0$,
giving $\lang\BGG(\Bn)\rang_\Bn=\BGG(\Bn_0)$) and $\BGG(\Bk)$ is given by
\beq \BGG(\Bk)=[\BGG_1(\Bk)\BL_0\BGG_1(\Bk)]^{-1}\BGG_1(\Bk), \eeq{twogam}
and the inverse is to be taken on the subspace $\CE_{\Bk}$ onto which $\BGG_1(\Bk)$ projects. 

\begin{figure}[!ht]
\centering
\includegraphics[width=0.8\textwidth]{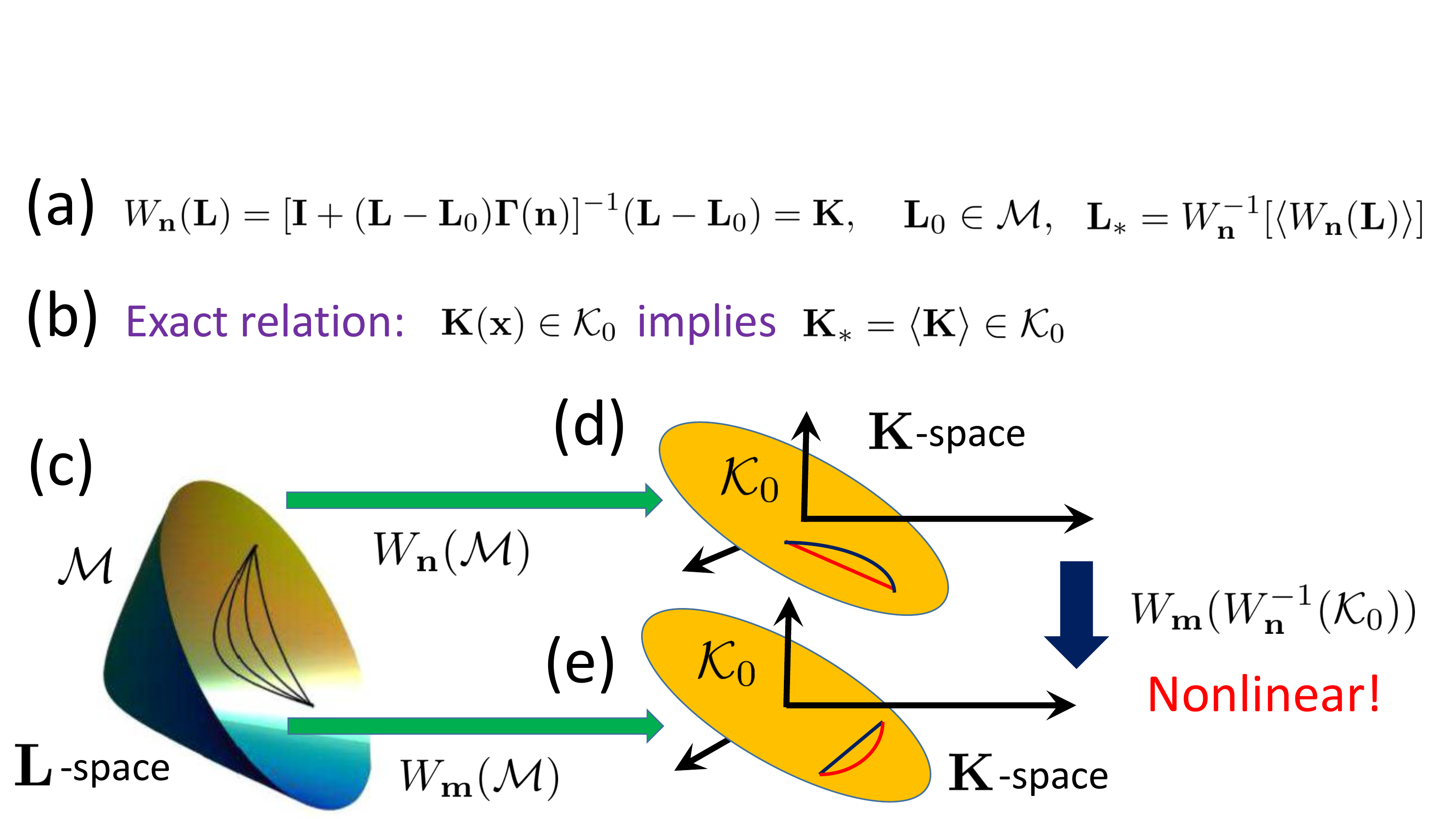}
\caption{\aj{Quick summary of the argument of Grabovsky \protect\cite{Grabovsky:1998:EREa}. The transformation $W_{\Bn}(\BL)$ defined in (a) reduces lamination in direction $\Bn$ to a linear average.
Therefore in $\BK$-space, for lamination in direction $\Bn$, one has (b), that $\BK_*=\langle\BK\rangle$, and so the set $\CK_0=W_{\Bn}(\CM)$ must be convex with no interior (as $\CM$ has no interior).
Thus $\CK_0$ must be a linear space (or a convex subset of a linear space). Choosing $\BL_0\in\CM$ guarantees that this linear space passes through the origin $0=W_{\Bn}(\BL_0)$ - thus $\CK_0$ is a
subspace, as in (c). It also must remain a subspace if we choose a different lamination direction $\Bm$. In figures (d) and (e) lamination in the directions $\Bn$ and $\Bm$ are represented by the red and blue
trajectories in the subspace $\CK_0$: these trajectories are straight lines on the surfaces $W_{\Bn}(\CM)$ and $W_{\Bm}(\CM)$ respectively. 
The subspace $\CK$ must be rather special in that $W_{\Bm}[W_{\Bn}^{-1}(\CK_0)]$ is a linear subspace,
even though the transformation $W_{\Bm}[W_{\Bn}^{-1}(\BK)]$ is a nonlinear transformation. This observation leads to the algebraic constraints on $\CK_0$ that are necessary and sufficient to ensure
that the set $\CM$ is stable under lamination.  \ay{The hyperbolic surface $\CM$ is reproduced from figure 4.1 of \protect\cite{Grabovsky:2016:CMM} (``\copyright IOP Publishing. Reproduced with permission. All rights reserved'')}}}
\labfig{2}
\end{figure}

\aj{As a simple example, for two dimensional conductivity with $\BL_0=\sigma_0\BI$ as our reference tensor, one sees that the $2\times 2$
matrix
\beq \BGY(\Bm)=(\BI-2\Bm\Bm^T)/2\sigma_0 \eeq{simex}
is trace-free and symmetric. We can then take $\CK_0$ as the subspace of trace-free and symmetric $2\times 2$ matrices. These have the property that the product of three of them (but not just two of them) is again 
trace-free and symmetric: assuming without loss of generality that one matrix is diagonal we have
\beq \begin{pmatrix} a & 0 \\ 0 & -a \end{pmatrix}
\begin{pmatrix} b & c \\ c & -b \end{pmatrix}
\begin{pmatrix} d & e \\ e & -d \end{pmatrix}=
\begin{pmatrix} abd+ace & & & abe-acd \\ abe-acd & & & -abd-ace \end{pmatrix},
\eeq{prod3}
which is again trace-free and symmetric, and thus the algebraic condition \eq{0.a} is satisfied.
Then the associated manifold ${\cal M}=W_{\bf n}^{-1}({\cal K}_0)$ consists of $2\times 2$ symmetric matrices with determinant $\sigma_0^2$,
and this is the manifold corresponding to the Dykhne \cite{Dykhne:1970:CTD} exact relation.}

Grabovsky's pioneering work, developed further with Sage in \cite{Grabovsky:1998:EREb},
provided essential clues that led to the breakthrough result \cite{Grabovsky:2000:ERE} establishing conditions 
that guarantee an exact relation holds for all composites, and not just laminates. Using carefully devised perturbation
expansions that had their basis in \cite{Milton:1990:RCF} Sect. 5,
coupled with analytic continuation arguments, one sees \cite{Grabovsky:2000:ERE}
that finding exact relations which hold for all composite geometries is tied with identifying tensor subspaces $\CK$ such that for all Fourier vectors $\Bk\ne 0$
one has
\beq \BB_1\BGY(\Bk)\BB_2\in\CK,~~{\rm for~all~}\BB_1,\BB_2\in\CK,
\eeq{0.1}
where $\BGY(\Bk)$ only depends on $\Bk/|\Bk|$, i.e. $\BGY(\Bk)=\BGY(\Bm)$ with $\Bm=\Bk/|\Bk|$ and $\BGY(\Bm)$ is the same operator as in \eq{0.aaa}. The space $\CK_0$ then could be $\CK$
or it may be just those symmetric or Hermitian matrices in $\CK$. (Previously in \cite{Milton:2002:TOC} Chap. 17, 
$\CK_0$ and $\CK$ had been labeled as $\CK$ and $\overline{\CK}$, respectively. We 
choose to drop the overline in $\overline{\CK}$ to simplify notation, as this space will be the focus of our analysis). Recently, Grabovsky \cite{Grabovsky:2017:MIF}
found a relation that holds for laminate geometries but not more general composites, so the condition \eq{0.a} is not sufficient
to guarantee that an exact relation holds for all microstructures: one needs to use \eq{0.1}.

The general theory of exact relations is very rich and Grabovsky and collaborators have systematically explored, and with tremendous effort,
exact relations for a wide variety of physically important problems, including conductivity with the Hall-effect, elasticity, piezoelectricity, thermoelasticity,
and thermoelectricity: \cite{Grabovsky:2016:CMM} gives a comprehensive review; see also \cite{Milton:2002:TOC} Chap. 17, and \cite{Grabovsky:2004:AGC}. To simplify the algebra
they assume $\CK$ has rotational invariance properties, so removing this assumption may yield a plethora of additional exact relations. The theory of exact relations
encompasses links between effective tensors: an example of such a link, for an isotropic 2-phase composite, is Levin's result \cite{Levin:1967:TEC} that the effective thermal 
expansion coefficient is known once the effective bulk modulus is measured.

Now one may ask: is there something deeper and more general behind these exact relations? Indeed, it is the purpose of this paper to reveal that there is something deeper. As indicated by
the argument presented in Fig.\fig{3}, exact relations should apply not only to effective tensors $\BL_*$ of periodic composites, but also to Dirichlet-to-Neumann maps of bodies containing
inhomogeneous media with inhomogeneities that are not necessarily small compared to the dimensions of $\GO$.
We formulate the problem slightly differently: in place of fields in $\CU$ are source terms, and we no longer require the subspaces $\CE$ and $\CJ$ to be comprised of periodic fields,
but rather fields that are square-integrable over $\RR^d$.

\begin{figure}[!ht]
\centering
\includegraphics[width=0.8\textwidth]{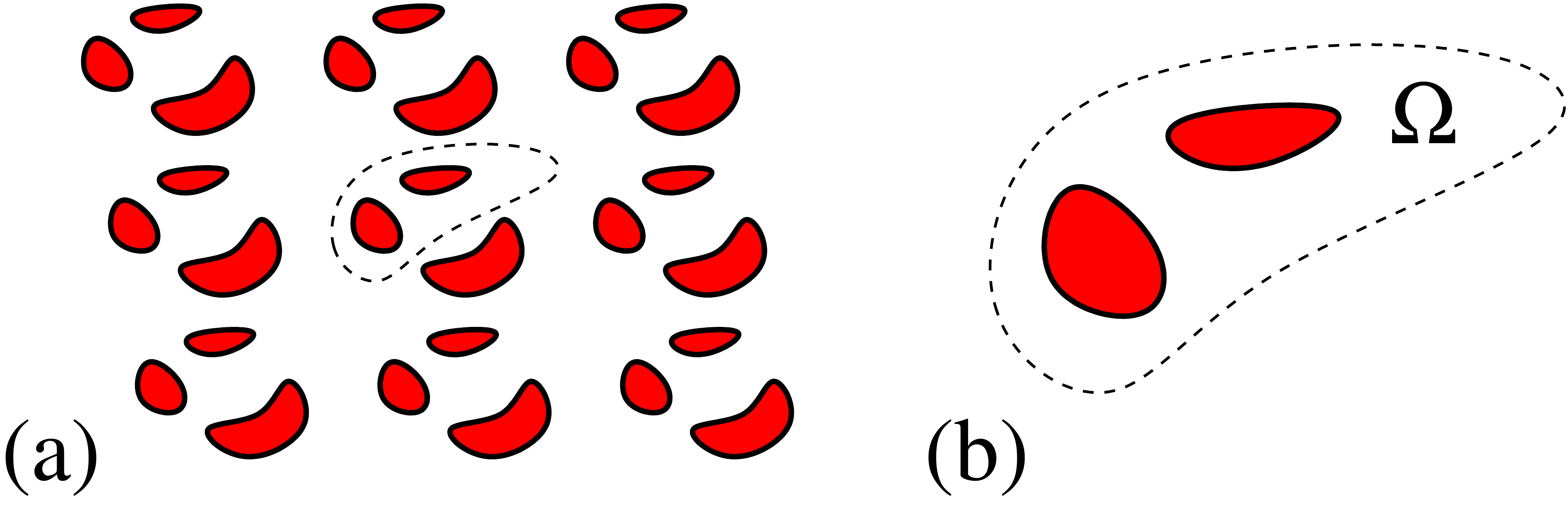}
\caption{\aj{The theory of exact relations for composites itself hints that there should be exact relations satisfied by the Dirichlet-to-Neumann map. Consider say a two-phase composite for
which an exact relation holds. The theory of exact relations for composites implies not only that $\BL_*\in\CM$ but also that suitably defined "polarization fields" $\mathbb{P}(\Bx)$ take
values in the subspace $\CK$ for all $\Bx$. Consider then the region $\GO$ outlined by the dashed lines in (a). It does not know that it is part of a periodic composite. Rather, the boundary
fields on $\Md\Omega$ must be such to force $\mathbb{P}(\Bx)$ within $\GO$ to take values in $\CK$. One of the goals of this paper is to identify these special boundary conditions }}
\labfig{3}
\end{figure} 

	\section{Functional framework}
\setcounter{equation}{0}
	\begin{Adef}
	\label{space}
	Let ${\cal H}={\cal J}\oplus{\cal E}$ be a tensor space of functions with $\RR^d$ as domain (e.g., $L^2(\RR^d)\otimes{\cal T}$, for a corresponding tensor space ${\cal T}$)
such that the projection $\BGG_1$ onto $\cal E$ acts locally in Fourier space, i.e., if $\BE=\BGG_1\BA$ then the Fourier components $\widehat{\BE}(\Bk)$ and $\widehat{\BA}(\Bk)$
of $\BE$ and $\BA$ are related via $\widehat{\BE}(\Bk)=\BGG_1(\Bk)\widehat{\BA}(\Bk)$ for some operator $\BGG_1(\Bk)$ that projects onto a subspace $\CE_{\Bk}\subset\CT$. \end{Adef}

In the case of the \aj{primary} equations \eq{b.0}, $\CE$ can be taken as the set of square-integrable fields $\BE(\Bx)$ such that $\BE=\Grad\Bu$ for some $m$ component potential $\Bu$, 
$\CJ$ can be taken as the set of square-integrable fields $\BJ(\Bx)$ such that $\Div\BJ=0$, $\CE_{\Bk}$ consists of rank-one $d\times m$ matrices of the form $\Bk\otimes\Ba$,
where $\Ba\in\mathbb{C}^m$, and consequently the action of $\BGG_1(\Bk)$ is given by
$\BGG_1(\Bk)\widehat{\BA}(\Bk)=\Bk\otimes(\Bk\cdot\widehat{\BA}(\Bk))/|\Bk|^2$ for $\Bk\ne 0$ and $\BGG_1(\Bk)\widehat{\BA}(\Bk)=0$ when $\Bk=0$. For
wider classes of partial differential equations, involving higher order derivatives, an explicit formula for $\BGG_1(\Bk)$ is given, for example, in
Section 12.2 of \cite{Milton:2002:TOC}, and in \cite{Milton:2013:SIG,Milton:2015:ATS}.

	Let $q$ denote the dimension of ${\cal T}$ and
	let ${\cal L}$ be the space of linear operators $\BA:{\cal H}\rightarrow{\cal H}$.
	Consider $\BL, \BL_0 \in{\cal L}$ defined as $\BL(\BR)(\Bx)=\BL(\Bx)\BR(\Bx)$ and respectively $\BL_0(\BR)(\Bx)=\BL_0\BR(\Bx)$,
         for $\BR\in {\cal H}$ 
        where $\BL(\Bx),\BL_0 \in L^{\infty}(\RR^d)\otimes L(\cal T)$ with $\BL_0$ denoting a given constant tensor. (Thus $\BL$ acts locally in space
while $\BL_0$ acts locally both in space and in Fourier space).
Assume that $\BL$ is self-adjoint, bounded and coercive, i.e., there exist constants $\Gb_0>\Ga_0>0$ such that
\beq \Gb_0\BI\geq \BL(\Bx) \geq \Ga_0\BI>0,\text{ for all }\Bx\in\RR^d, \eeq{boundcoer}
where the inequalities hold in the sense of the associated quadratic forms. We emphasize that many physical
problems where $\BL$ is not self-adjoint, including those where $\BL$ is complex and symmetric with a positive definite imaginary part,
can be converted to equivalent problems taking the required form \eq{I.1}, where the new $\BL$ is self-adjoint, bounded and coercive
(see, \cite{Cherkaev:1994:VPC},\cite{Milton:1990:CSP} Sect. 18, \cite{Cherkaev:2000:VMS} Chap. 13,\cite{Milton:2002:TOC} Sect. 12.11, \cite{Milton:2009:MVP}, \cite{Milton:2016:ETC} Sect. 5.2).
	We will next consider linear PDE's admitting a formulation in the following canonical form (see \cite{Milton:2016:ETC}),
	\begin{equation}
	\label{can-form}
	\BJ=\BL\BE-\Bh, \mbox{ with } \BJ\in {\cal J},\quad \BE\in {\cal E},\quad \Bh\in (\BL-\BL_0){\cal H}.
	\end{equation}
This is a restricted form of \eq{I.1} since in general $\BL-\BL_0$ may be singular and thus $(\BL-\BL_0){\cal H}$ does not equal ${\cal H}$.
The boundedness and coercivity conditions \eq{boundcoer} ensure that problem \eqref{can-form} 
has a unique solution for every $\Bh\in(\BL-\BL_0){\cal H}$.
        Since $\Bh\in(\BL-\BL_0){\cal H}$ we can equivalently let $\Bh=-(\BL-\BL_0)\Bs$ with $\Bs\in\CH$ and consider the equation
        \begin{equation}
	\label{can-alt}
	\BJ=\BL\BE+(\BL-\BL_0)\Bs, \mbox{ with } \BJ\in {\cal J},\quad \BE\in {\cal E},\quad \Bs\in {\cal H}.
	\end{equation}
	\begin{Adef}
	\label{Gamma-operator}
	Following \cite{Milton:2002:TOC} let us introduce the following  operator 
	$\BGG:{\cal H}\rightarrow {\cal H}$, defined by $\BGG \BA=\BE$ if and only if $\BE\in{\cal E}$ and $\BA-\BL_0\BE\in{\cal J}$. These equations
        are easily solved by going to Fourier space and one sees that $\BGG$ is univalued, with action in Fourier space given by the following lemma:
	\end{Adef}
 	\begin{lemma}
	\label{Gamma-op-prop}
	The operator $\BGG$ introduced in Definition \ref{Gamma-operator}, is self adjoint and acts locally in Fourier space. Explicitly, if $\BE=\BGG\BA$ then the 
         Fourier components $\widehat{\BE}(\Bk)$ and $\widehat{\BA}(\Bk)$ of $\BE$ and $\BA$ are related via $\widehat{\BE}(\Bk)=\BGG(\Bk)\widehat{\BA}(\Bk)$, where
$\BGG(\Bk)$ is defined by \eq{twogam}.
	\end{lemma}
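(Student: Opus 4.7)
The plan is to pass to Fourier space and convert the defining equations of $\BGG$ into a pointwise algebraic system in $\Bk$, which can then be solved explicitly. By Definition \ref{Gamma-operator}, $\BE=\BGG\BA$ is characterized by the two conditions $\BE\in\CE$ and $\BA-\BL_0\BE\in\CJ$. Since by Definition \ref{space} the orthogonal projection $\BGG_1$ onto $\CE$ acts locally in Fourier space via the pointwise projector $\BGG_1(\Bk)$ onto $\CE_{\Bk}$, and $\BL_0$ also acts locally in Fourier space (being constant in space), Plancherel's theorem lets us rewrite these two conditions pointwise in $\Bk$ as
\beq \BGG_1(\Bk)\widehat{\BE}(\Bk)=\widehat{\BE}(\Bk),\qquad \BGG_1(\Bk)\bigl(\widehat{\BA}(\Bk)-\BL_0\widehat{\BE}(\Bk)\bigr)=0. \eeq{plan.1}

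Next, I would substitute the first identity into the second to obtain $\BGG_1(\Bk)\widehat{\BA}(\Bk)=\BGG_1(\Bk)\BL_0\BGG_1(\Bk)\widehat{\BE}(\Bk)$, and then invert the operator $\BGG_1(\Bk)\BL_0\BGG_1(\Bk)$ on the subspace $\CE_{\Bk}$. Invertibility on this subspace is where I would use the coercivity \eq{boundcoer}: for any $\BE\in\CE_{\Bk}$ one has $\langle\BE,\BGG_1(\Bk)\BL_0\BGG_1(\Bk)\BE\rangle_{\CT}=\langle\BE,\BL_0\BE\rangle_{\CT}\geq \Ga_0\|\BE\|^2$, so $\BGG_1(\Bk)\BL_0\BGG_1(\Bk)$ is a strictly positive self-adjoint operator on $\CE_{\Bk}$ with inverse bounded uniformly by $1/\Ga_0$. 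Composing with $\BGG_1(\Bk)$ on the right yields the claimed formula
\beq \widehat{\BE}(\Bk)=[\BGG_1(\Bk)\BL_0\BGG_1(\Bk)]^{-1}\BGG_1(\Bk)\widehat{\BA}(\Bk)=\BGG(\Bk)\widehat{\BA}(\Bk), \eeq{plan.2}
with $\BGG(\Bk)$ given by \eq{twogam}. The uniform-in-$\Bk$ boundedness then certifies that $\BGG$ is a well-defined bounded multiplier on $\CH$, and univalence of $\BGG$ follows immediately.

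For self-adjointness, I would argue pointwise: $\BGG_1(\Bk)$ is an orthogonal projection on $\CT$, hence self-adjoint, and $\BL_0$ is self-adjoint, so $\BGG_1(\Bk)\BL_0\BGG_1(\Bk)$ is self-adjoint on $\CT$; its inverse on $\CE_{\Bk}$ is therefore also self-adjoint on $\CE_{\Bk}$. Since $\BGG(\Bk)$ both takes values in $\CE_{\Bk}$ and annihilates $\CE_{\Bk}^{\perp}$ (via the right-multiplication by $\BGG_1(\Bk)$), a short symmetry computation shows $\langle\BGG(\Bk)\BA,\BB\rangle_{\CT}=\langle\BA,\BGG(\Bk)\BB\rangle_{\CT}$ for all $\BA,\BB\in\CT$. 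Transferring this back to $\CH$ by Plancherel proves $\BGG=\BGG^{*}$. Alternatively, the same conclusion can be reached directly in physical space by using that for any $\BA_1,\BA_2\in\CH$ with $\BE_i=\BGG\BA_i\in\CE$ and $\BJ_i=\BA_i-\BL_0\BE_i\in\CJ$, orthogonality of $\CE$ and $\CJ$ gives $\langle\BA_1,\BGG\BA_2\rangle=\langle\BL_0\BE_1,\BE_2\rangle=\langle\BGG\BA_1,\BA_2\rangle$.

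The only genuine obstacle is the invertibility of $\BGG_1(\Bk)\BL_0\BGG_1(\Bk)$ on $\CE_{\Bk}$ with a uniform bound on the inverse, but as noted this is immediate from the coercivity assumption \eq{boundcoer} (inherited by, or separately assumed for, the reference tensor $\BL_0$); everything else is a routine Fourier-space manipulation.
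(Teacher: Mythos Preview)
Your proof is correct and follows essentially the same route as the paper: pass to Fourier space, use the pointwise characterization $\BGG_1(\Bk)\widehat{\BE}(\Bk)=\widehat{\BE}(\Bk)$ and $\BGG_1(\Bk)(\widehat{\BA}(\Bk)-\BL_0\widehat{\BE}(\Bk))=0$, solve for $\widehat{\BE}(\Bk)$ by inverting $\BGG_1(\Bk)\BL_0\BGG_1(\Bk)$ on $\CE_{\Bk}$, and read off self-adjointness from the symmetric form $\BGG_1(\Bk)[\BGG_1(\Bk)\BL_0\BGG_1(\Bk)]^{-1}\BGG_1(\Bk)$. Your explicit coercivity justification for the invertibility step and your alternative physical-space self-adjointness argument are welcome additions, but do not constitute a different approach.
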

To establish the lemma, suppose $\BE\in{\cal E}$ and $\BA-\BL_0\BE\in{\cal J}$. This, and the orthogonality of $\CE$ and $\CJ$, implies that for all $\Bk$ the Fourier components $\widehat{\BE}(\Bk)$ and $\widehat{\BA}(\Bk)-\BL_0\widehat{\BE}(\Bk)$ lie in $\CE_{\Bk}$ and its orthogonal complement, respectively. Recalling that $\BGG_1(\Bk)$ projects onto $\CE_{\Bk}$ we obtain
\beq 0=\BGG_1(\Bk)[\widehat{\BA}(\Bk)-\BL_0\widehat{\BE}(\Bk)]=\BGG_1(\Bk)\widehat{\BA}(\Bk)-[\BGG_1(\Bk)\BL_0\BGG_1(\Bk)]\BGG_1(\Bk)\widehat{\BE}(\Bk), \eeq{lempr}
and this is easily solved for $\widehat{\BE}(\Bk)$, yielding $\widehat{\BE}(\Bk)=\BGG(\Bk)\widehat{\BA}(\Bk)$, where $\BGG(\Bk)$ is defined by \eq{twogam}. The expression
for $\BGG(\Bk)$ can equivalently be rewritten as
\beq \BGG(\Bk)=\BGG_1(\Bk)[\BGG_1(\Bk)\BL_0\BGG_1(\Bk)]^{-1}\BGG_1(\Bk), \eeq{twogamalt}
\aj{(where the inverse is to taken on the space $\CE_{\Bk}$)} which is evidently self-adjoint, as $\BGG_1(\Bk)$ and $\BL_0$ are self adjoint. \ayy{Note also that $\BL_0\BGG(\Bk)$ can also be interpreted as a non-orthogonal projection onto $\BL_0{\cal E}$ along ${\cal J}$.}


\ayy{Let $\BM:\CT\to\CT$ be a self adjoint positive semidefinite operator. Using the definitions of $\BL\in {\cal L}$ and $\BL_0 \in {\cal L}$ we define the operator $\BK:{\cal H}\rightarrow {\cal H}$ as $\BK(\BR)(\Bx)=\BK(\Bx)\BR(\Bx)$ with 
\begin{equation}
\label{K(x)}
\BK(\Bx)=W_{\BM}(\BL(\Bx))=
[\BI+(\BL(\Bx)-\BL_0)\BM]^{-1}(\BL(\Bx)-\BL_0)\in L(\cal T). \end{equation}
\aj{It follows from Grabovsky's definition 3.17 and lemma 3.18 \cite{Grabovsky:2016:CMM}
	that $\BI+(\BL(\Bx)-\BL_0)\BM$ is invertible when $\BM\BL_0\BM\leq \BM$, and so under this assumption
	the fractional linear transformation $W_{\BM}:L(\cal T)\to L(\cal T)$ given by \eqref{K(x)} is well defined.}}

	For $\lambda\in[0,1]$ consider the sequence of operators $\BL_\lambda:{\cal H}\rightarrow {\cal H}$ defined by
	\begin{equation}  
	\label{L-lambda}
	\BL_\lambda=\BL_0+\lambda[\BI+(1-\lambda)(\BL-\BL_0)\BM]^{-1}(\BL-\BL_0),
	\end{equation}
	and note that $\BL_{\lambda=1}\!=\!\BL$ and $\BL_{\lambda=0}\!=\!\BL_0$. 
\ay{We also point out that $\BL_\lambda$ is just a homothety by $\lambda$ in the $W$ variables, i.e., $W_{\BM}(\BL_\lambda)=\lambda W_{\BM}(\BL)$. 
In particular, in the context of composites $\BL_\lambda$ with $\BM=\BGG(\Bn)$ is in fact the effective tensor of a laminate in direction $\Bn$ 
of $\BL$ and $\BL_0$ with a volume fraction $\lambda$ of $\BL$.}
	
	Here we assume these operators $\BL_\lambda$ are well-defined
(which is a consequence of Theorem 4.2 under some restrictions on $\BL$ and $\BL_0$).
	Let $\BGY\in{\cal L}$, be defined by $\BGY=\BM-\BGG$. As $\BGG$ acts locally in Fourier space,
if $\BB=\BGY\BA$, then the Fourier components $\widehat{\BB}(\Bk)$ and $\widehat{\BA}(\Bk)$ of $\BB$ and $\BA$ satisfy a local relation
$\widehat{\BB}(\Bk)=\BGY(\Bk)\widehat{\BA}(\Bk)$ with $\BGY(\Bk)=\BM-\BGG(\Bk)$ taking values in $L(\cal T)$.
Assume there exists a subspace ${\cal K}\subset L(\cal T)$ such that for all $\Bk$
	\begin{equation}
	\label{K} {\cal K}\BGY(\Bk){\cal K}\doteq\{\BB_1\BGY(\Bk)\BB_2, \mbox{ for } \BB_1, \BB_2\in{\cal K}\}\subset {\cal K},
	\end{equation} 
in which $\BB_1\BGY(\Bk)\BB_2$ is the composition of the three maps $\BB_2:\CT\to\CT$, $\BGY(\Bk):\CT\to\CT$, and $\BB_1:\CT\to\CT$. If
\eqref{K} holds for all $\Bk$ then it clearly holds if $\BGY(\Bk)$ is replaced by any tensor $\BA$ in the subspace $\CA$ spanned by the
$\BGY(\Bk)$ as $\Bk$ varies. Hence \eqref{K} can be rewritten as
\beq \CK\CA\CK\doteq\{\BB_1\BA\BB_2, \mbox{ for } \BB_1, \BB_2\in{\cal K},\,\BA\in\CA\}\subset\CK. \eeq{Kalt}
Spaces $\CK$ having this property have been called an associative $\CA$-multialgebra by Grabovsky \cite{Grabovsky:2016:CMM}. Instead
of testing that \eq{K} holds for all $\BGY(\Bk)$ as $\Bk$ varies, it suffices to test it for a basis of $\CA$. 
	
	Next, let us denote by $\Be_1,\Be_2,...,\Be_q$ a basis of ${\cal T}$. 
For given $q$ functions $\Bs_1,\Bs_2,...,\Bs_q\in{\cal H}$ consider the following linear map $\mathbb{S}:{\cal T}\rightarrow {\cal H}$ defined by 
	\begin{equation}
	\label{G} 
	\mathbb{S}\Be_i=\Bs_i, \mbox{ for all } i\in\{1,...,q\}.
	\end{equation} 
 We let $\mathbb{S}(\Bx)$ denote the associated field taking for each $\Bx$ values in $L(\cal T)$ such that $\mathbb{S}(\Bx)\Be_i=\Bs_i(\Bx)$ for all $i$.
This field can be considered to lie in the space $\mathfrak{H}=L^{2}(\RR^d)\otimes L(\cal T)$, endowed with the inner product
	\begin{equation}
	\label{ip}
         \langle \mathbb{A}, \mathbb{B} \rangle_{\mathfrak{H}}=\sum_{i=1}^q \langle\mathbb{A}\Be_i,\mathbb{B}\Be_i\rangle_{\cal H}.
	\end{equation} 	
	Note that any linear operator $\BF: \CH \rightarrow \CH $, such as $\BK$ or $\BGY$, has a natural extension to an operator on $\mathfrak{H}$: we define
	\begin{equation}
	\label{oext}
        \BF\mathbb{A}=\mathbb{B} \mbox{ iff } \BF(\mathbb{A}\Be_i)=\mathbb{B}\Be_i \mbox{ for all }i,
	\end{equation} 
        where, to simplify notation, we use the same symbol for the operator acting on $\mathfrak{H}$ as for the operator acting on $\CH$. 

	\section{The central theorem}
\setcounter{equation}{0}
	Define ${\cal S}$ as a subspace of $L(\cal T)$ such that $\BA\cal S \subset \cal K$ for all $\BA\in\cal K$. For example, \eqref{K}
implies $\cal S$ could be taken as ${\cal Q}=\CA\CK$, defined as the space spanned by $\BGY(\Bk)\CK$ as $\Bk$ varies. \ay{A natural choice for ${\cal S}$ 
is the largest subspace with the property that $\BA\cal S \subset \cal K$, although it is then not clear how easily that subspace can be computed.} 
        The central theorem of this paper states:
	
	\begin{Thm}
	\label{teorema-principala}
	Consider problem \eqref{can-form} and let $\BL_0,\BM,\BL$ satisfy all the conditions presented in the previous section. Assume that the following conditions hold:
	\begin{eqnarray}
	&& \BK(\Bx)=W_\BM(\BL(\Bx))\in {\cal K}\quad \mbox{ for all }\, \Bx,\label{H1}\\
	&& \BL_\lambda \quad \mbox{is bounded and coercive on } {\cal H} \mbox{ for all }\lambda\in [0,1],\label{H2}\\
	&& \Bs_1,\Bs_2,...,\Bs_q\in{\cal H}\mbox{ are such that } \mathbb{S}(\Bx) \in \CS \mbox{ for all }\,\Bx. \label{H3}
	\end{eqnarray}
	{{where $\BL_\lambda$ was defined at \eqref{L-lambda} and $W_\BM$ was defined at \eqref{K(x)}. Next, consider the set of $q$ sources $\Bh_i=(\BL-\BL_0)\Bs_i$,
			$i=1,2,\ldots,q$, and let  $(\BE_i,\BJ_i)$ ($i=\{1,..,q\}$) denote the unique solution of the problem \eqref{can-form} for each of the sources $\Bh_1,\Bh_2,...,\Bh_q$ respectively. For each solution pair $(\BE_i,\BJ_i)$ define the corresponding polarization field via $\BP_i=\BJ_i-\BL_0\BE_i$ and introduce the operator $\mathbb{P}:{\cal T}\rightarrow {\cal H}$ defined by 	
\begin{equation}
			\label{gen-P} \mathbb{P}\Be_i=\BP_i, \mbox{ for all }\, i\in\{1,...,q\}.
			\end{equation}
Associated with $\mathbb{P}$ is the field $\mathbb{P}(\Bx)$ taking values for each $\Bx$ in $L(\cal T)$ such that $\mathbb{P}(\Bx)\Be_i=\BP_i(\Bx)$ for all $i$.}} Then 
\beq \mathbb{P}(\Bx)\in \CK  \mbox{ for all }\, \Bx. \eeq{mr}
	\end{Thm}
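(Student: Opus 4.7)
The plan is to rewrite the system as a fixed-point equation for the polarization operator $\mathbb{P}$ in the $\BK$-variables, expand $\mathbb{P}$ as a Neumann series in a homotopy parameter $\lambda$ interpolating between $\BL_0$ and $\BL$, verify term by term that each summand lies pointwise in $\CK$, and then transfer the conclusion from small $|\lambda|$ to $\lambda=1$ by analytic continuation.

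For each source $\Bh_i=-(\BL-\BL_0)\Bs_i$, set $\BP_i=\BJ_i-\BL_0\BE_i=(\BL-\BL_0)(\BE_i+\Bs_i)$. Since $\BJ_i=\BL_0\BE_i+\BP_i\in\CJ$ and $\BE_i\in\CE$, Definition \ref{Gamma-operator} gives $\BE_i=-\BGG\BP_i$, so $[\BI+(\BL-\BL_0)\BGG]\BP_i=(\BL-\BL_0)\Bs_i$. Left-multiplying by $[\BI+(\BL-\BL_0)\BM]^{-1}$ and using $\BK=W_{\BM}(\BL)$ together with $\BGY=\BM-\BGG$ collapses the equation to $[\BI-\BK\BGY]\BP_i=\BK\Bs_i$. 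Running the same manipulation column by column and using the extension (\ref{oext}) to the operator-valued space $\mathfrak{H}$ yields $[\BI-\BK\BGY]\mathbb{P}=\BK\mathbb{S}$. Because $W_{\BM}(\BL_\lambda)=\lambda\BK$, the polarization $\mathbb{P}_\lambda$ associated with $\BL_\lambda$ and the same source $\mathbb{S}$ satisfies $[\BI-\lambda\BK\BGY]\mathbb{P}_\lambda=\lambda\BK\mathbb{S}$. Hypothesis (\ref{H2}) makes the primal problem for $\BL_\lambda$ uniquely and continuously solvable for every $\lambda\in[0,1]$, and via the equivalence between primal and polarization formulations this provides a bounded solution operator $\mathbb{S}\mapsto\mathbb{P}_\lambda$ for each such $\lambda$. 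Standard perturbation of bounded invertible operators then extends $\lambda\mapsto\mathbb{P}_\lambda$ to a holomorphic $\mathfrak{H}$-valued map on an open complex neighborhood $\Omega\supset[0,1]$.

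For $|\lambda|$ sufficiently small the Neumann expansion $\mathbb{P}_\lambda=\sum_{n\geq 0}\lambda^{n+1}(\BK\BGY)^n\BK\mathbb{S}$ converges in $\mathfrak{H}$. Unfolding $\BGY$ in Fourier, the $n$-th term evaluated at $\Bx$ is
\[
\int\!\Big(\prod_{j=1}^n\frac{d\Bk_j\,d\Bx_j}{(2\pi)^d}\Big)\,e^{i\sum_j\Bk_j\cdot(\Bx_{j-1}-\Bx_j)}\,\BK(\Bx)\BGY(\Bk_1)\BK(\Bx_1)\cdots\BGY(\Bk_n)\BK(\Bx_n)\mathbb{S}(\Bx_n),
\]
with $\Bx_0=\Bx$. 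For each fixed $(\Bk_j,\Bx_j)$ the matrix integrand lies in $\CK$: starting from the right, $\BK(\Bx_n)\mathbb{S}(\Bx_n)\in\CK\cdot\CS\subset\CK$ by the defining property of $\CS$, after which each successive left-multiplication by $\BK(\Bx_i)\BGY(\Bk_{i+1})$ preserves $\CK$ thanks to the associative multialgebra condition (\ref{K}). Since $\CK$ is a linear subspace of the finite-dimensional space $L(\CT)$, hence closed, integration in $\Bk_j,\Bx_j$ keeps us in $\CK$, so $\mathbb{P}_\lambda(\Bx)\in\CK$ for a.e.\ $\Bx$ whenever $|\lambda|$ is small. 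To close the argument, fix any linear functional $\ell$ on $L(\CT)$ vanishing on $\CK$ and any $\varphi\in C_0^\infty(\RR^d)$; the scalar $F_{\ell,\varphi}(\lambda)=\int_{\RR^d}\varphi(\Bx)\,\ell(\mathbb{P}_\lambda(\Bx))\,d\Bx$ is holomorphic on $\Omega$ as a continuous linear functional of $\mathbb{P}_\lambda$, and vanishes on a neighborhood of $\lambda=0$ by the previous step, so by the identity theorem $F_{\ell,\varphi}(1)=0$. Varying $\ell$ and $\varphi$ forces $\mathbb{P}(\Bx)=\mathbb{P}_1(\Bx)\in\CK$ for a.e.\ $\Bx$, as required.

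The hard part is the analyticity infrastructure in the second paragraph: passing from the bounded coercivity of $\BL_\lambda$ on $\CH$ (Hypothesis (\ref{H2})) to a bounded inverse of $\BI-\lambda\BK\BGY$ on the relevant subspace of $\mathfrak{H}$, uniformly in $\lambda$ on a complex neighborhood of $[0,1]$. This requires identifying the quadratic form carried by $\BI-\lambda\BK\BGY$ with the energy $\langle\BE,\BL_\lambda\BE\rangle$ from the primal formulation, and then upgrading real coercivity to holomorphy in $\lambda$ via standard holomorphic perturbation theory. The remaining steps are essentially algebraic (the derivation of the fixed-point equation) and combinatorial (the inductive right-to-left verification that each Neumann term stays in $\CK$).
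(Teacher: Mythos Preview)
Your proposal is correct and follows essentially the same route as the paper: derive the fixed-point equation $(\BI-\BK\BGY)\mathbb{P}=\BK\mathbb{S}$, show via the Neumann series that $\mathbb{P}_\lambda(\Bx)\in\CK$ for small $\lambda$, and analytically continue to $\lambda=1$. The only substantive difference is in the step you flag as ``the hard part'': rather than identifying a quadratic form, the paper establishes invertibility of $\BI-\lambda\BK\BGY$ on $[0,1]$ by the direct algebraic identity $(\BI-\lambda\BK\BGY)^{-1}\lambda\BK=[\BI+(\BL_\lambda-\BL_0)\BGG]^{-1}(\BL_\lambda-\BL_0)$, which connects immediately to the well-posedness of the $\BL_\lambda$-problem guaranteed by (\ref{H2}); the paper also handles the $\CK$-membership of the Neumann terms via an operator-level induction on partial sums (switching between real and Fourier space) rather than your explicit integral representation, which sidesteps any distributional subtleties in writing out the kernel of $\BGY$.
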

	
	\begin{proof}

        Our proof of this result has much in common with the proof establishing sufficient conditions for an exact relation to hold for all composite geometries
        (see \cite{Grabovsky:2000:ERE}, \cite{Milton:2002:TOC} Sect.  17.3, \cite{Grabovsky:2016:CMM} Sect. 4.5, and \cite{Grabovsky:2017:MIF}).
        From \eqref{H2} we have that, for any given $\Bh\in(\BL-\BL_0){\cal H}$ there exists 
        unique $\BE_\lambda\in {\cal E}$ and $\BJ_\lambda\in{\cal J}$ that solve \eqref{can-form} with $\BL=\BL_\Gl$. We choose to define the 
        polarization field $\BP_\lambda$ as,
	\begin{equation}
	\label{polarization}
	\BP_\lambda=\BJ_\lambda-\BL_0\BE_\lambda =(\BL_\lambda-\BL_0)\BE_\lambda-\Bh.
	\end{equation}
	(Note that the polarization field $\BP_\lambda$ is not $(\BL_\lambda-\BL_0)\BE_\lambda$.) Then
        from Definition \ref{Gamma-operator} we have that 
	\begin{equation}
	\label{Gamma-P}
	\BGG\BP_\lambda=\BGG(\BJ_\lambda-\BL_0\BE_\lambda)=-\BE_\lambda.
	\end{equation}
	Indeed, \eqref{Gamma-P} follows from $-\BE_\lambda\in{\cal E}$ and $\BJ_\lambda-\BL_0\BE_\lambda-\BL_0(-\BE_\lambda)=\BJ_\lambda\in{\cal J}$. Hence we obtain
	\begin{eqnarray}
	\label{1}
	[\BI+(\BL_\lambda-\BL_0)\BGG]\BP_\lambda & = & \BP_\lambda-(\BL_\lambda-\BL_0)\BE_\lambda\nonumber\\
	& = & (\BL_\lambda-\BL_0)\BE_\lambda-\Bh-(\BL_\lambda-\BL_0)\BE_\lambda\nonumber\\
	& = & -\Bh.
	\end{eqnarray}
	Thus from \eqref{1} together with the uniqueness of $\BE_\Gl,\BJ_\Gl$ we have that, for all $\lambda\in[0,1]$,
	\begin{equation}
	\label{inv-L-lambda}
	[\BI+(\BL_\lambda-\BL_0)\BGG]^{-1}: (\BL-\BL_0)\cal{H}\rightarrow \cal{H}\mbox{ is a well-defined linear operator}.
	\end{equation}
\aj{With $\Gl=1$, this result is Grabovsky's corollary 3.19 \cite{Grabovsky:2016:CMM} but this follows in our case from different arguments than in his book.}
	Next, for $\lambda=1$ in \eqref{1} we obtain
  \begin{equation}
  \label{2}
  \BP=-[\BI+(\BL-\BL_0)\BGG]^{-1}\Bh,
  \end{equation}
where in \eqref{2} and in what follows we use $\BP$ instead of $\BP_{\lambda=1}$.
This may be equivalently rewritten as follows (see \cite{Grabovsky:2000:ERE} Sect 3.2, or \cite{Milton:2002:TOC} Sect. 14.9, for a similar approach)
  \begin{eqnarray}
  \label{3}
\BP & = & -[\BI+(\BL-\BL_0)\BGG]^{-1}\Bh\nonumber\\
&=&-[\BI+(\BL-\BL_0)\BM+(\BL-\BL_0)(\BGG-\BM)]^{-1}\Bh\nonumber\\
&=&[\BI-[\BI+(\BL-\BL_0)\BM]^{-1}(\BL-\BL_0)(\BM-\BGG)]^{-1}[\BI+(\BL-\BL_0)\BM]^{-1}(\BL-\BL_0)\Bs \nonumber\\
&=&(\BI-\BK\BGY)^{-1}\BK \Bs,
  \end{eqnarray}
  where $\Bh=-(\BL-\BL_0)\Bs$, $\BK\in{\cal L}$ was introduced at \eqref{K(x)}, and $\BGY=\BM-\BGG$ was defined at \eqref{K}.
 Using the notation introduced immediately after \eqref{gen-P}, equality \eqref{3} 
can be equivalently written as
  \begin{equation}
  \label{4}
  \mathbb{P}=(\BI-\BK\BGY)^{-1}\BK \mathbb{S},
  \end{equation}
where $\mathbb{S}:{\cal T}\rightarrow {\cal H}$ was defined at \eqref{G}. We choose to regard $\mathbb{S}$ and $\mathbb{P}$ as fields in $\mathfrak{H}$
and regard $\BK$ and $\BGY$ in \eqref{4} as operators acting in $\mathfrak{H}$, defined according to \eqref{oext}. Similarly we can define fields $\mathbb{E}(\Bx)$
and $\mathbb{J}(\Bx)$ via
\beq \mathbb{E}(\Bx)\Be_i=\BE_i(\Bx),\quad \mathbb{J}(\Bx)\Be_i=\BE_i(\Bx), 
\eeq{extf}
and the governing equations become
\beq \mathbb{J}(\Bx)=\BL(\Bx)\mathbb{E}(\Bx)+(\BL(\Bx)-\BL_0)\mathbb{S}(\Bx),\mbox{ with }\mathbb{E}\in\mathfrak{E},\quad \mathbb{J}\in\mathfrak{J},
\eeq{extgov}
in which $\mathfrak{E}$ is comprised of fields $\mathbb{E}$ such that $\mathbb{E}\Be_i\in\CE$ for all $i$,
while $\mathfrak{J}$ is comprised of fields $\mathbb{J}$ such that $\mathbb{J}\Be_i\in\CJ$ for all $i$.

Consider the following sequence of related fields,
  \begin{equation}
    \label{5}
    \mathbb{P}_{\lambda}=(\BI-\lambda \BK\BGY)^{-1}\lambda\BK\mathbb{S}.
    \end{equation}
  For small $\lambda$ the Neumann series for $\lambda \BK\BGY$ is convergent and we have 
  \begin{eqnarray}
      \label{6}
      \mathbb{P}_{\lambda}&=&(\BI-\lambda \BK\BGY)^{-1}\lambda\BK\mathbb{S}\nonumber\\
      &=& \sum_{j=0}^{\infty}\lambda^{j+1} (\BK\BGY)^{j}\BK\mathbb{S}.
      \end{eqnarray}
It is to be emphasized that in this expansion $\BK$ and $\BGY$ are operators: they act on the field in $\mathfrak{H}$ to the right of them. Related expansions
in the theory of composites were first introduced in \cite{Milton:1990:RCF}, sect.5, for the conductivity problem, and their convergence properties,
allowing for possibly non-symmetric conductivity tensors, were studied in \cite{Clark:1994:MEC}. They also form the basis of accelerated iterative Fast
Fourier transform (FFT) techniques for evaluating the fields in composites and the associated effective tensors \cite{Eyre:1999:FNS} 
(see also \cite{Milton:2002:TOC} Sect. 14.9 and Sect. 14.10), that generally
converge faster than the iterative FFT techniques first proposed in \cite{Moulinec:1994:FNM}. However the application that motivates their introduction
in our paper is their essential role in the theory of exact relations in composites \cite{Grabovsky:2000:ERE}. 

To prove $\mathbb{P}_{\lambda}(\Bx)$ takes values in $\CK$ when $\mathbb{S}(\Bx)$ takes values in $\CS$, one proceeds by induction. Define the partial sums
  \begin{equation}
      \label{6.5}
      \mathbb{P}_{\lambda}^m=\sum_{j=0}^{m}\lambda^{j+1} (\BK\BGY)^{j}\BK\mathbb{S}, \quad \mathbb{Q}_{\lambda}^m=\BGY\sum_{j=0}^{m}\lambda^{j+1} (\BK\BGY)^{j}\BK\mathbb{S}.
 \end{equation}
Clearly these fields, which are in are $\mathfrak{H}$, are related by
  \begin{equation}
      \label{6.6}
      \mathbb{P}_{\lambda}^{m+1}=\Gl\BK\mathbb{Q}_{\lambda}^m+\Gl\BK\mathbb{S},\quad  \mathbb{Q}_{\lambda}^m=\BGY\mathbb{P}_{\lambda}^m.
 \end{equation}
Assume for some $m$ that for every $\Bx$, $\mathbb{P}_{\lambda}^m(\Bx)\in \CK$. This is clearly true when $m=0$ by the definition of $\CS$.
Then the Fourier components of $\mathbb{P}_{\lambda}^m$ also lie in $\CK$. It follows that the Fourier components of
$\mathbb{Q}_{\lambda}^m$, and hence also the values of $\mathbb{Q}_{\lambda}^m(\Bx)$, lie in $\CQ$, defined (as in the beginning of this section) as
the space spanned by $\BGY(\Bk)\CK$ as $\Bk$ varies. Then since $\BA\CQ\subset\CK$
for all $\BA\in\CK$ we deduce that $\mathbb{P}_{\lambda}^{m+1}(\Bx)$ lies in $\CK$ for all $\Bx$.  

{{We conclude that there exists $\lambda_0>0$ such that for $\lambda<\lambda_0$}}, $\mathbb{P}_{\lambda}(\Bx)$ defined by \eqref{5} lies in $\CK$ for all $\Bx$. This implies 
that for small enough $\lambda$ we have
\begin{equation}
\label{8}
f_{\mathbb{F}}(\Gl)=\langle (\BI-\lambda \BK\BGY)^{-1}\lambda\BK \mathbb{S},\mathbb{F}\rangle_{\mathfrak{H}}=0, 
\mbox{ for all } \mathbb{F}\in\mathfrak{H} \mbox{ with } \mathbb{F}(\Bx) \in \CK^{\perp} \mbox{ for all } \Bx,
\end{equation}
in which $\CK^{\perp}$ is the orthogonal complement of $\CK$ in the space $L(\cal T)$ with respect to the inner product defined (analogously to \eqref{ip}) by
	\begin{equation}
	\label{20a}
         \langle \BA, \BB \rangle_{L(\cal T)}=\sum_{i=1}^q \langle\BA\Be_i,\BB\Be_i\rangle_{\CT} \text{ for all }\BA,\BB\in L(\cal T).
	\end{equation} 	

Next we will prove that $(\BI-\lambda \BK\BGY)^{-1}\lambda\BK \mathbb{S}$ is analytic on an open set $D\subset {\mathbb C}$ with $[\displaystyle\frac{\lambda_0}{2},1]\subset D$.
Indeed note that the operator function $(\BI-\lambda \BK\BGY)^{-1}:{\mathbb C}\rightarrow {\cal L}$ is analytic for $\lambda$ such that $\displaystyle\frac{1}{\lambda}\in\rho(\BK\BGY)$  (where $\rho(\BK\BGY)$ denotes the resolvent set of $\BK\BGY$) and therefore the function $(\BI-\lambda \BK\BGY)^{-1}\lambda\BK \mathbb{S}$ will be analytic on this set of $\lambda$ values (see \cite{Lax:2002:FA} Chap. 17). 
   
Thus, we observe that using the openness of $\rho(\BK\BGY)$ it is enough to show that $[1,\displaystyle\frac{2}{\lambda_0}]\!\subset\! \rho(\BK\BGY)$, as this will imply that there exists an open set $D_1$ (bounded above and below by positive numbers) with $[1,\displaystyle\frac{2}{\lambda_0}]\subset D_1 \subset\! \rho(\BK\BGY)$ and in turn this will give the existence of an open set $D\subset \left\{\lambda, \displaystyle\frac{1}{\lambda}\in D_1\right\}$ with $[\displaystyle\frac{\lambda_0}{2},1]\subset D$  such that $(\BI-\lambda \BK\BGY)^{-1}\BK \mathbb{S}$ is analytic on $D$.
    
    We have that
    \begin{eqnarray}
    \label{9}
    (\BI-\lambda \BK\BGY)^{-1}\lambda\BK & = &[\BI-\lambda[\BI+(\BL-\BL_0)\BM]^{-1}(\BL-\BL_0)\BGY]^{-1}\lambda\BK\nonumber\\
    &=&[\BI+(\BL-\BL_0)\BM-\lambda(\BL-\BL_0)(\BM-\BGG)]^{-1}[\BI+(\BL-\BL_0)\BM]\lambda\BK\nonumber\\
    &=&\lambda[\BI+(1-\Gl)(\BL-\BL_0)\BM+\lambda(\BL-\BL_0)\BGG]^{-1}(\BL-\BL_0) \nonumber\\
    &=&\lambda[\BI+\Gl[\BI+(1-\Gl)(\BL-\BL_0)\BM]^{-1}(\BL-\BL_0)\BGG]^{-1}[\BI+(1-\Gl)(\BL-\BL_0)\BM]^{-1}(\BL-\BL_0)\nonumber\\
    &=&\left[\BI+(\BL_\lambda-\BL_0)\BGG\right]^{-1}(\BL_\lambda-\BL_0), {{\mbox{ for } \lambda\in(0,1]}},\nonumber\\
    \end{eqnarray}
    Then, using the fact that by definition, for $\Gl\ne 0$, we have
\beq \BL_\lambda-\BL_0= \lambda[\BI+(1-\lambda)(\BL-\BL_0)\BM]^{-1}(\BL-\BL_0), \eeq{ad.1}
or equivalently,
\beq (\BL_\lambda-\BL_0)=\Gl(\BL-\BL_0)-
                                      (1-\lambda)(\BL-\BL_0)\BM(\BL_\lambda-\BL_0),
\eeq{ad.2}
which implies
\beq \BL_\lambda-\BL_0=(\BL-\BL_0)[\lambda \BI-(1-\lambda)\BM(\BL_\lambda-\BL_0)].
\eeq{ad.3}
We see that $(\BL_\lambda-\BL_0)\cal{H}\subset$$(\BL-\BL_{0})\cal{H}$ and this together
with \eqref{inv-L-lambda} gives
    \begin{equation}
    \label{10}
    (\BI-\lambda \BK\BGY)^{-1}\lambda\BK\in{\cal L} \mbox{ for all }\lambda\in[0,1],
    \end{equation}
and this implies 
    $$(\BI-\lambda \BK\BGY)^{-1}\lambda\BK\BGY\in{\cal L} \Rightarrow -\BI +(\BI-\lambda \BK\BGY)^{-1}\in{\cal L}\Rightarrow (\BI-\lambda \BK\BGY)^{-1}\in{\cal L} \mbox{ for } \lambda\in(0,1].$$
    The last result above implies that $[1,\infty)\!\subset\! \rho(\BK\BGY)$ which in turn as explained above implies that there existence an open set $D$ with $[\displaystyle\frac{\lambda_0}{2},1]\subset D$  such that $(\BI-\lambda \BK\BGY)^{-1}\BK \mathbb{S}$ is analytic on $D$.

   This together with \eqref{8} and by analytic continuation in the complex plane implies that 
    
    \begin{equation}
    \label{11}
    f_{\mathbb{F}}(1)=\langle (\BI-\BK\BGY)^{-1}\BK \mathbb{S},\mathbb{F}\rangle_{\mathfrak{H}}=0, 
 \mbox{ for all } \mathbb{F}\in\mathfrak{H} \mbox{ with } \mathbb{F}(\Bx) \in \CK^{\perp} \mbox{ for all } \Bx.
     \end{equation}
    
    Thus, as desired, we conclude that $\mathbb{P}=(\BI-\BK\BGY)^{-1}\BK \mathbb{S}$ takes values in $\CK$ when $\mathbb{S}(\Bx)\in\CS$ for all $\Bx$.
	\end{proof}
	
The fundamental algebraic property $\eqref{K}$ is clearly an algebraic property of the subspace $\CK$ comprised
of operators $\BK$ mapping $\CT$ to $\CT$ and is independent of what basis $\Be_i$, $i=1,2,3,\ldots,q$, for $\CT$
we may choose. Up to now we could apply our theory when the field $\mathbb{S}(\Bx)$
took values in $\CS$. Suppose instead that, for some nonsingular mapping $\BD:\CT\to\CT$,
$\mathbb{S}(\Bx)\BD^{-1}$, regarded as the composition of the two maps $\BD^{-1}:\CT\to\CT$ and
$\mathbb{S}(\Bx):\CT\to\CT$, took values in $\CS$. In this case we can introduce a new basis
\beq \Be'_i=\BD\Be_i,\quad i=1,2,3,\ldots,q,
\eeq{e.1}
and define $\mathbb{S}'(\Bx)$ as that field taking values in $L(\cal T)$ such that
\beq \mathbb{S}'(\Bx)\Be'_i=\Bs_i=\mathbb{S}(\Bx)\Be_i=\mathbb{S}(\Bx)\BD^{-1}\Be'_i,
\eeq{e.2}
implying $\mathbb{S}'(\Bx)=\mathbb{S}(\Bx)\BD^{-1}$. Accordingly, we need to introduce the field $\mathbb{P}'(\Bx)$ 
as that field taking values in $L(\cal T)$ such that
\beq \mathbb{P}'(\Bx)\Be'_i=\BP_i=\mathbb{P}(\Bx)\Be_i=\mathbb{P}(\Bx)\BD^{-1}\Be'_i,
\eeq{e.3}
implying $\mathbb{P}'(\Bx)=\mathbb{P}(\Bx)\BD^{-1}$. Our theorem says that $\mathbb{P}'(\Bx)$ takes values in
$\CK$ when $\mathbb{S}'(\Bx)$ takes values in $\CS$, and so we conclude that, for all nonsingular $\BD:\CT\to\CT$ and for any $\Bx\in\mathbb{R}^d$,
\beq \mathbb{P}(\Bx)\BD^{-1}\in\CK~~\text{when}~~\mathbb{S}(\Bx')\BD^{-1}\in\CS~~\text{for all}~~\Bx',
\eeq{e.4}
or equivalently that
\beq \mathbb{P}(\Bx)\in\CK\BD~~\text{when}~~\mathbb{S}(\Bx')\in\CS\BD~~\text{for all}~~\Bx',
\eeq{e.4a}
where 
\beqa  \CS\BD=\{\BA\in L(\cal T)~|~\BA=\BS\BD~~\text{for some}~\BS\in\CS\}, \nonum
 \CK\BD=\{\BA\in L(\cal T)~|~\BA=\BK\BD~~\text{for some}~\BK\in\CK\}.
\eeqa{e.00}

It is not immediately clear when the assumption of the central theorem that $\BL_\lambda$ is bounded and coercive for all $\lambda\in[0,1]$ is satisfied. The following theorem gives a simple condition on $\BM$ that guarantees this.

\ayy{
\begin{Thm}
	\label{example}
	Let $\BM$ be such that $\BM\BL_0\BM\leq\BM$, and assume that $\BL,\BL_0$ are coercive. Then $\BL_\lambda$ defined at \eqref{L-lambda} is bounded and coercive for all $\lambda\in[0,1]$.
\end{Thm}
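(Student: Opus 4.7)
The operator $\BL_\lambda$ acts pointwise on ${\cal H}$ because $\BL(\Bx)$, $\BL_0$, and $\BM$ all do, so the two-sided bounds on ${\cal H}$ are equivalent to uniform-in-$\Bx$ bounds $\Ga_\lambda\BI\leq\BL_\lambda(\Bx)\leq\Gb_\lambda\BI$ on the tensor operator $\BL_\lambda(\Bx):{\cal T}\to{\cal T}$. My plan is therefore to fix $\Bx$ and work with $\BL=\BL(\Bx)$ as a bounded coercive self-adjoint operator on ${\cal T}$. For well-posedness of the definition \eqref{L-lambda}, I would observe that $\BM':=(1-\lambda)\BM$ satisfies
\[
\BM'\BL_0\BM'=(1-\lambda)^2\BM\BL_0\BM\leq(1-\lambda)^2\BM\leq(1-\lambda)\BM=\BM',
\]
so Grabovsky's Lemma 3.18, invoked just after \eqref{K(x)} and applied to $\BM'$, gives invertibility of $\BI+(1-\lambda)(\BL-\BL_0)\BM$ for every $\lambda\in[0,1]$. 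Hence $\BL_\lambda$ is well-defined, and its self-adjointness follows from $(\BI+\BN\BM)^{-1}\BN=\BN(\BI+\BM\BN)^{-1}$ together with the self-adjointness of $\BN:=\BL-\BL_0$ and $\BM$.

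For the bounds, I would first reduce to $\BL_0=\BI$ by the congruence $\widetilde\BL=\BL_0^{-1/2}\BL\BL_0^{-1/2}$, $\widetilde\BM=\BL_0^{1/2}\BM\BL_0^{1/2}$: a direct calculation shows $\BL_0^{-1/2}\BL_\lambda\BL_0^{-1/2}=\widetilde\BL_\lambda$, the analogous operator built from $\widetilde\BL,\BI,\widetilde\BM$, and the hypothesis $\BM\BL_0\BM\leq\BM$ becomes $\widetilde\BM^2\leq\widetilde\BM$, equivalently $0\leq\widetilde\BM\leq\BI$. Under these reductions, whenever $\BN$ is semidefinite the identity $(\BI+\BA\BB)^{-1}\BA=\BA(\BI+\BB\BA)^{-1}$ applied with $\BA=|\BN|^{1/2}$ yields the symmetric form
\[
\BL_\lambda-\BI=\pm\lambda\,|\BN|^{1/2}\bigl[\BI\pm(1-\lambda)\,|\BN|^{1/2}\BM\,|\BN|^{1/2}\bigr]^{-1}|\BN|^{1/2},
\]
with sign matching that of $\BN$. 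Since $0\leq|\BN|^{1/2}\BM\,|\BN|^{1/2}\leq|\BN|$ (using $\BM\leq\BI$), the bracket is bounded and bounded away from zero, and combined with the standard inequality $c_1\BP\leq\BP^{1/2}\BA\BP^{1/2}\leq c_2\BP$ for $c_1\BI\leq\BA\leq c_2\BI$ and psd $\BP$, one obtains the uniform two-sided bounds $\min(\BI,\BL)\leq\BL_\lambda\leq(1-\lambda)\BI+\lambda\BL$.

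The main obstacle is the indefinite, non-commuting case. The route I would take is to combine the resolvent-type identity
\[
(\BL_\lambda-\BL_0)^{-1}=\tfrac{1}{\lambda}\bigl[(\BL-\BL_0)^{-1}+(1-\lambda)\BM\bigr]\qquad(\lambda\in(0,1]),
\]
derived algebraically from \eqref{L-lambda} when $\BN$ is invertible and extended in general by the perturbation $\BL\rightsquigarrow\BL+\ep\BI$, $\ep\to 0^+$, with a continuity argument: well-posedness makes $\BL_\lambda$ jointly continuous in $(\BL,\lambda)$ on the compact set $\{\BL:\Ga_0\BI\leq\BL\leq\Gb_0\BI\}\times[0,1]$, its eigenvalues are therefore continuous, strictly positive at the endpoints $\lambda=0$ and $\lambda=1$, and cannot vanish at any interior $\lambda^\ast$ without forcing $\BI+(1-\lambda^\ast)(\BL-\BL_0)\BM$ to be singular, contradicting the well-posedness already proven. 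By compactness, the eigenvalues of $\BL_\lambda$ are then bounded above and below by positive constants depending only on $\Ga_0$, $\Gb_0$, and $\|\BM\|$ (controlled, via $\BM\BL_0\BM\leq\BM$, by $\|\BL_0^{-1}\|$), yielding the required uniform bounds on $\BL_\lambda$; quantifying this dependence carefully, and verifying that the $\Bx$-dependence is absorbed into constants involving only $\Ga_0,\Gb_0,\|\BL_0\|,\|\BL_0^{-1}\|$, will be the most delicate part of the write-up.
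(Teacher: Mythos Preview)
Your overall architecture matches the paper's: reduce to a pointwise statement, conjugate by $\BL_0^{-1/2}$ to set $\BL_0=\BI$ and turn the hypothesis into $0\le\BM'\le\BI$, and then use continuity of eigenvalues in $\lambda$ together with coercivity at $\lambda=0,1$ to conclude that it suffices to rule out a zero eigenvalue of $\BL_\lambda$ at any interior $\lambda^\ast$. The separate treatment of the semidefinite case is correct but not needed; the paper handles all cases uniformly.

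The genuine gap is your justification of the key step, ``cannot vanish at any interior $\lambda^\ast$ without forcing $\BI+(1-\lambda^\ast)(\BL-\BL_0)\BM$ to be singular.'' That implication is not established, and in fact it does not follow: invertibility of $\BI+(1-\lambda)(\BL-\BL_0)\BM$ guarantees only that $\BL_\lambda$ is well-defined and bounded, not that it is nonsingular. If $\BL_\lambda v=0$ with $v\ne0$ (after reduction to $\BL_0=\BI$, writing $\BN=\BL-\BI$), a short computation gives a nonzero $u$ with $\bigl[\BI+((1-\lambda)\BM+\lambda\BI)\BN\bigr]u=0$, which is a statement about a \emph{different} operator; nothing forces $\BI+(1-\lambda)\BN\BM$ itself to be singular. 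The resolvent identity you mention concerns invertibility of $\BL_\lambda-\BL_0$, not of $\BL_\lambda$, so it does not close the gap either.

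The paper supplies exactly the missing algebraic fact. After the congruence, one computes directly
\[
\bigl[\BI+(1-\lambda)(\BL'-\BI)\BM'\bigr]\BL'_\lambda=(1-\lambda)(\BI-\BM')+\BL'\bigl[\lambda\BI+(1-\lambda)\BM'\bigr],
\]
and since $\lambda\BI+(1-\lambda)\BM'$ has spectrum in $[\lambda,1]$, right-multiplication by its inverse gives $\BD+\BL'$ with $\BD=(1-\lambda)(\BI-\BM')[\lambda\BI+(1-\lambda)\BM']^{-1}\ge0$ (a self-adjoint function of $\BM'$). Then $\BD+\BL'>0$, so $\BL'_\lambda$ is invertible for every $\lambda\in(0,1)$, and your continuity argument finishes the proof. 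Inserting this factorization in place of the unjustified implication would make your write-up correct and essentially identical to the paper's.
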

}

\begin{proof}
	\ayy{
	The boundedness of $\BL_\lambda$ can be seen to be a corollary of Grabovsky's Definition 3.17 and Lemma 3.18 \cite{Grabovsky:2016:CMM}. Then we remark that, condition $\BM\BL_0\BM\leq\BM$ implies the fact that the family of self-adjoint operators $\BL_\lambda$ is continuous in $\lambda\in[0,1]$. Therefore, eigenvalues of $\BL_\lambda$ depend continuously on $\lambda$. }

	\ayy{When $\lambda=1$, or $\lambda=0$, we have $\BL_\lambda=\BL$ and respectively $\BL_\lambda=\BL_0$ which are bounded and coercive by hypothesis. }
	
\ayy{	Thus, if there exists a $\lambda$ for which $\BL_\lambda$ is not coercive, then there exists $\lambda'\in (0,1)$  so that at least one of the eigenvalues of $\BL_\lambda'$ is zero, which in turn will imply that $\BL_\lambda'$ is a singular matrix. Hence, to prove coercivity for the family $\BL_\lambda$ for all $\lambda\in (0,1)$ it is sufficient to show that $\BL_\lambda$ is invertible for all $\lambda\in (0,1)$. }
	
\ayy{	In this regard, if we consider 
	\begin{equation}
	\label{not1} \BL'_\lambda=\BL^{-1/2}_0\BL_\lambda\BL^{-1/2}_0,\; \BL'=\BL^{-1/2}_0\BL\BL^{-1/2}_0,\; \BM'=\BL^{1/2}_0\BM\BL^{1/2}_0
	\end{equation}
	\eqref{L-lambda} becomes
	\begin{equation}
	\label{not2}
	\BL'_\lambda=\BI+\lambda[\BI+(1-\lambda)(\BL'-\BI)\BM']^{-1}(\BL'-\BI),
	\end{equation}
	where the condition $\BM\BL_0\BM\leq\BM$ is rewritten for $\BM'$ as $(\BM')^2\leq \BM'$ and we also have $\BL'>0$. Then invertibility of $\BL_\lambda$ is equivalent to invertibility of $\BL'_\lambda$ which in turn is equivalent to the invertibility of
	\begin{equation}
	\label{not3}
	\BF=[\BI+(1-\lambda)(\BL'-\BI)\BM']\BL'_\lambda=(1-\lambda)(\BI-\BM')+\BL'(\lambda\BI+(1-\lambda)\BM').
	\end{equation}
	The eigenvalues of $\BM'$ are between 0 and 1 and therefore the eigenvalues of $\lambda\BI+(1-\lambda)\BM'$ will always be between $\lambda$ and 1. Thus invertibility of $\BF$ is equivalent to the invertibility of 
	\begin{equation}
	\label{not4}
	\BF'=\BD+\BL', \;\BD=(1-\lambda)(\BI-\BM')(\lambda\BI+(1-\lambda)\BM')^{-1}.
	\end{equation}
	The eigenvalues of the self-adjoint operator $\BD$ are clearly non-negative and therefore the operator $\BD+\BL'$ is positive definite and hence, invertible.
}
\end{proof}

	\begin{Arem}
	Here we show that the condition \eqref{K} simplifies in the case where for $\Bk\ne 0$, $\BGG(\Bk)$ only depends on $\Bk/|\Bk|$,
\ay{and that $\BM$ can be eliminated from the condition}.
 By subtracting the conditions implied by \eqref{K} that
\beq {\cal K}(\BM-\BGG(\Bn)){\cal K}\subset \CK,\quad {\cal K}(\BM-\BGG(\Bm)){\cal K}\subset \CK, \eeq{z.1}
we get that ${\cal K}(\BGG(\Bn)-\BGG(\Bm)){\cal K}\subset \CK$ for all unit vectors $\Bm$ and $\Bn$. Hence if \eq{z.1} holds it will still
hold if $\BM$ is replaced by $\lang\BGG(\Bn)\rang_{\Bn}$ where the angular brackets $\lang\cdot\rang_\Bn$ denote a possibly weighted average over the sphere $|\Bn|=1$.
The formula \eq{twogam} for $\BGG(\Bk)$ implies $0\leq \BGG(\Bn)\leq \BL_0^{-1}$. So with the choice $\BM=\lang\BGG(\Bn)\rang_{\Bn}$
we have that $\BM\leq \BL_0^{-1}$ which as $\BM\geq 0$ then implies the condition that $\BM\BL_0\BM\leq\BM$. Other choices of $\BM$ may be useful too,
as given $\CK$, both $\BM$ and $\BL_0$ determine the manifold $\CM=W_\BM^{-1}(\CK_0)$, where $\CK_0$ consists
of all self-adjoint maps in $\CK$.
	\end{Arem}
	\section{Exact identities satisfied by the Green's function}	
\setcounter{equation}{0}
Consider a point $\Bx^0$ and take $\Bh(\Bx)$ to be proportional to $\delta$, \ayy{which as conveyed in the Remark \ref{smooth-assumption} denotes a smooth approximation } of a Dirac delta function localized at $\Bx=\Bx^0$:
     \begin{equation}
\label{13}
\Bh(\Bx)=\Bh^0\Gd(\Bx-\Bx^0),\quad \mbox{ with }\Bh^0=-(\BL(\Bx^0)-\BL_0)\Bs^0,
\end{equation}
where the amplitude $\Bs^0\in\CT$ is prescribed. \ayy{We also recall that $\BG$ denotes in this paper an approximate Green's function (see Remark \ref{smooth-assumption}) and here and in the next two sections, we assume that $\BL(\Bx)$ is smooth enough}.

Then, with appropriate decay conditions at infinity
imposed so that the Green's function (fundamental solution) exists and is unique, \eqref{12.1} and \eqref{polarization} informally imply 
     \begin{equation}
\label{14}
\BP(\Bx)=(\BL(\Bx^0)-\BL_0)\Bs^0\Gd(\Bx-\Bx^0)-(\BL(\Bx)-\BL_0)\BG(\Bx,\Bx^0)(\BL(\Bx^0)-\BL_0)\Bs^0.
\end{equation}

With the tensor $\Bs^0\in\CT$ replaced by a succession of $q$ tensors $\Bs^0_1$, $\Bs^0_2$, \ldots, $\Bs^0_q$, each in $\CT$, and 
defining the linear map $\BS^0: {\cal T} \rightarrow  {\cal T}$ via
$\BS^0\Be_i=\Bs^0_i$, we obtain
     \begin{eqnarray}
\label{15}
\mathbb{P}(\Bx)\Be_i & = &(\BL(\Bx^0)-\BL_0)\Bs^0_i\Gd(\Bx-\Bx^0)-(\BL(\Bx)-\BL_0)\BG(\Bx,\Bx^0)(\BL(\Bx^0)-\BL_0)\Bs^0_i \nonumber\\
& = & [(\BL(\Bx^0)-\BL_0)\Gd(\Bx-\Bx^0)-(\BL(\Bx)-\BL_0)\BG(\Bx,\Bx^0)(\BL(\Bx^0)-\BL_0)]\BS^0\Be_i,
\end{eqnarray}
informally implying
     \begin{equation}
\label{16}
\mathbb{P}(\Bx)=\BT(\Bx,\Bx^0)\BS^0,
\end{equation}
with 
\begin{equation}
\label{17}
\BT(\Bx,\Bx^0)=(\BL(\Bx^0)-\BL_0)\Gd(\Bx-\Bx^0)-(\BL(\Bx)-\BL_0)\BG(\Bx,\Bx^0)(\BL(\Bx^0)-\BL_0).
\end{equation}


For fixed $\Bx$ and $\Bx^0$, with $\Bx\ne\Bx^0$ we can consider $\BT(\Bx,\Bx^0)$
as a map from $\CT$ to $\CT$, and given any $\BS^0:\CT\to\CT$ we can choose sources $\Bs^0_i$ such that $\BS^0\Be_i=\Bs^0_i$. Define
$\BT(\Bx,\Bx^0)\CS=\{\BT(\Bx,\Bx^0)\BS~|~\BS\in\CS\}$ where $\BT(\Bx,\Bx^0)\BS$ is the composition of the two maps,
and $\CS$, as defined at the beginning of Sect. 4, is a subspace of $L(\cal T)$ such that $\BA\CS\subset\CK$ for all $\BA\in\CK$.
Then our theorem says that $\BT(\Bx,\Bx^0)\CS\subset\CK$ for all $\Bx$, $\Bx^0$, with $\Bx\ne\Bx^0$. Alternatively we can view 
$\BT(\Bx,\Bx^0)$ as a map from $L(\cal T)$ to $L(\cal T)$ defined as $\BB=\BT(\Bx,\Bx^0)\BA$ iff 
$\Bb_i=\BB\Be_i$ and $\Ba_i=\BA\Be_i$ satisfy $\Bb_i=\BT(\Bx,\Bx^0)\Ba_i$ for $i=1,2,\ldots,q$. Viewed in this way,
$\BT(\Bx,\Bx^0)$ maps $\CS$ to a subset of $\CK$. More generally, \eq{e.4a} implies $\BT(\Bx,\Bx^0)$ maps
$\CS\BD$ to a subset of $\CK\BD$ for all nonsingular $\BD:\CT\to\CT$. Also given any nonsingular $\BS_0$ we can
choose $\BD$ so that $\BS_0\BD\in\CS$ and $\BT(\Bx,\Bx^0)$ will then map this to an element of $\CK\BD$.

To better understand this property of $\BT(\Bx,\Bx^0)$ consider the operator $\BT:\CH\to\CH$, associated with the 
integral kernel $\BT(\Bx,\Bx^0)$, given by
\beq \BT=(\BL-\BL_0)-(\BL-\BL_0)\BG(\BL-\BL_0)=(\BI-\BK\BGY)^{-1}\BK,
\eeq{17.a}
where the last identity follows from \eqref{4}. Define the associated sequence of operators
\beq \BT_\Gl=\Gl(\BI-\lambda \BK\BGY)^{-1}\BK=\sum_{j=0}^{\infty}\lambda^{j+1} (\BK\BGY)^{j}\BK,
\eeq{17.8}
where \ajj{as $\BK$ and $\BGY$ are bounded operators the operator expansion converges
for small enough $\Gl$}.
The associated integral kernel $\BT_\Gl(\Bx,\Bx^0)$ (regarded as a generalized function) can then be written as a series of convolutions, the first 
few terms of which are \ay{given informally by}
\beqa \BT_\Gl(\Bx,\Bx^0) & = & \Gl\Gd(\Bx-\Bx^0)\BK(\Bx^0)+\Gl^2\BK(\Bx)\widehat{\BGY}(\Bx-\Bx^0)\BK(\Bx^0) \nonum
&~& +\Gl^3\int_{\RR^d}\BK(\Bx)\widehat{\BGY}(\Bx-\By_1)\BK(\By_1)\widehat{\BGY}(\By_1-\Bx^0)\BK(\Bx^0)~d\By_1 \nonum
&~& + \Gl^4\int_{\RR^d}\int_{\RR^d}\BK(\Bx)\widehat{\BGY}(\Bx-\By_1)\BK(\By_1)\widehat{\BGY}(\By_1-\By_2)\BK(\By_2)\widehat{\BGY}(\By_2-\Bx^0)\BK(\Bx^0)~d\By_1~d\By_2+\ldots, \nonum
&~&
\eeqa{17.9}
in which $\widehat{\BGY}(\Bx)$, is the Fourier transform of the operator $\BGY(\Bk)$ associated with the operator $\BGY$. Clearly
$\widehat{\BGY}(\Bx)$ lies in the space spanned by the $\BGY(\Bk)$, and therefore \eq{K} implies that each successive term in the expansion
\eq{17.9} lies in $\CK$, and hence $\BT_\Gl(\Bx,\Bx^0)\in\CK$ for small enough $\Gl$, \ajj{assuming the convergence of the series is pointwise not just in the $L^2$ -sense
implied by the boundedness of $\BK$ and $\BGY$.}
\aj{As the properties of  $\BT(\Bx,\Bx^0)$ must be such as to
account for the central theorem 4.1, there presumably must be analytic continuation or other arguments 
which allow us to deduce} that $\BT(\Bx,\Bx^0)\in\CK$ for each $\Bx$ and $\Bx^0$. \aj{We make the assumption that such arguments will be found.} 
\ay{By definition, ${\cal S}$ is a subspace of $L(\cal T)$ 
such that $\BA\cal S \subset \cal K$ for all $\BA\in\cal K$. So as $\BT(\Bx,\Bx^0)$ takes values in $\CK$ we immediately see
that $\BT(\Bx,\Bx^0)$ maps $\CS$ to a subset of $\CK$, as expected. It appears that the converse need not be true as an operator in
$L(\cal T)$ that maps $\CS$ to a subset of $\CK$, generally need not lie in $\CK$. Thus the assertion that $\BT(\Bx,\Bx^0)\in\CK$ for each $\Bx$ and $\Bx^0$
appears to contain more information than that covered by the central theorem 4.1. However, observe that if we consider ${\cal S}$ to be the largest set such that $\BA{\cal S}\subset {\cal K}$ for all $\BA\in{\cal K}$ then ${\cal S}$ becomes a unit algebra. Then, since our theorem implies $\BT(\Bx,\Bx^0)\BS\in{\cal K}$ for all $\BS\in{\cal S}$, by choosing $\BS=\BI$ we obtain $\BT(\Bx,\Bx^0)\in{\cal K}$.}

If $\BL$ is a self adjoint operator then so too is $\BG$. Indeed, supposing that $\BE'=\BG\Bh'$ and $\BE=\BG\Bh$ then \eqref{can-form} implies 
\begin{equation}
\label{18}
	\BJ=\BL\BE-\Bh,\quad \BJ'=\BL\BE'-\Bh' \mbox{ with } \BJ, \BJ'\in {\cal J},\quad \BE, \BE'\in {\cal E},
\end{equation}
and using the orthogonality of $\CE$ and $\CJ$ we have 
\begin{equation}
\label{19}
\langle\BG\Bh,\Bh'\rangle_\CH=\langle \BE,\BL\BE'-\BJ'\rangle_\CH=\langle \BE,\BL\BE'\rangle_\CH
=\langle \BL\BE,\BE'\rangle_\CH=\langle \BL\BE-\BJ,\BE'\rangle_\CH=\langle \Bh,\BG\Bh'\rangle_\CH.
\end{equation}
which implies $\BG$ is self adjoint. In terms of $\BT(\Bx,\Bx^0)$ this says that
\begin{equation}
\label{20}
\BT(\Bx,\Bx^0)=\BT^{\dagger}(\Bx^0,\Bx),
\end{equation}
where $\BT^{\dagger}(\Bx^0,\Bx)$ is the adjoint of $\BT(\Bx^0,\Bx)$ on the space $\CT$. The extension of
$\BT^{\dagger}(\Bx^0,\Bx)$ to an operator (going by the same name)  acting on $L(\cal T)$ is
also the adjoint, with respect to the inner product \eqref{20a},
of the extension of  $\BT(\Bx,\Bx^0)$ that acts on the space $L(\cal T)$. To see this, we have
\beqa \langle \BA, \BT^{\dagger}(\Bx^0,\Bx)\BB \rangle_{L(\cal T)}& = & \sum_{i=1}^q \langle\BA\Be_i,[\BT^{\dagger}(\Bx^0,\Bx)\BB]\Be_i)\rangle_{\CT}
=\sum_{i=1}^q \langle\BA\Be_i,\BT^{\dagger}(\Bx^0,\Bx)(\BB\Be_i)\rangle_{\CT} \nonum
& = & \sum_{i=1}^q \langle\BT(\Bx,\Bx^0)\BA\Be_i,\BB\Be_i\rangle_{\CT}=\langle \BT(\Bx,\Bx^0)\BA, \BB \rangle_{L(\cal T)}.
\eeqa{20b}

Our theorem then implies $\BT^{\dagger}(\Bx^0,\Bx)\CS\BD\subset\CK\BD$ for all $\Bx$, $\Bx^0$, with $\Bx\ne\Bx^0$, and for all nonsingular
$\BD:\CT\to\CT$. By swapping $\Bx$ and $\Bx^0$ we see
that $\BT(\Bx,\Bx^0)\CS\BD$ and $\BT^{\dagger}(\Bx,\Bx^0)\CS\BD$ are both subsets of $\CK$. The latter implies that for all
$\BB\in\CS\BD$ and $\BA\in(\CK\BD)^\perp$ that 
\begin{equation}
\label{21}
0=\langle \BA,\BT^{\dagger}(\Bx,\Bx^0)\BB\rangle_{L(\cal T)}=\langle\BT(\Bx,\Bx^0)\BA,\BB\rangle_{L(\cal T)}.
\end{equation}

So we see that $\BT(\Bx,\Bx^0)$ maps not only $\CS\BD$ to a subset of $\CK\BD$, but also $(\CK\BD)^\perp$ to a subset of $(\CS\BD)^\perp$, in which $(\CS\BD)^{\perp}$ is the orthogonal 
complement of $\CS\BD$ in the space $L(\cal T)$ with respect to the inner product \eq{20a}. 

Further insight into the relation between the operator $\BT$ defined by \eq{17.a} and the Green's operator $\BG$ can be gained by applying the
operator $(\BL-\BL_0)\BGG$ to both sides of
\beq \BJ-\BL_0\BE=(\BL-\BL_0)\BE-\Bh,\quad \BJ\in\CJ,\quad\BE\in\CE, \eeq{A.1}
where we now only require that $\Bh\in\CH$. This gives
\beq -(\BL-\BL_0)\BE=(\BL-\BL_0)\BGG(\BL-\BL_0)\BE-(\BL-\BL_0)\BGG\Bh, \eeq{A.2}
and hence 
\beq (\BL-\BL_0)\BE=[\BI+(\BL-\BL_0)\BGG]^{-1}(\BL-\BL_0)\BGG\Bh. \eeq{A.3}
As $\BE=\BG\Bh$ we see that the Green's function $\BG$ satisfies
\beqa (\BL-\BL_0)\BG & = & [\BI+(\BL-\BL_0)\BGG]^{-1}(\BL-\BL_0)\BGG \nonum
                    & = & (\BI-\BK\BGY)^{-1}\BK\BGG \nonum
                    & = & \BT\BGG,
\eeqa{A.4}
where we have made use of \eqref{9} (with $\Gl=1$). Instead of \eq{extgov} we may write
\beq \mathbb{J}(\Bx)=\BL(\Bx)\mathbb{E}(\Bx)-\mathbb{H}(\Bx),\mbox{ with }\mathbb{E}\in\mathfrak{E},\quad \mathbb{J}\in\mathfrak{J},
\eeq{A.5}
and we see that if $\BGG\mathbb{H}$ takes values in $\CS\BD$, then $(\BL-\BL_0)\mathbb{E}$ takes values in  $\CK\BD$. The constraint
that $\BGG\mathbb{H}$ takes values in $\CS\BD$ is of course a non-local constraint on $\mathbb{H}(\Bx)$ and therefore not
as easy to check as the constraint that $\mathbb{S}(\Bx)\in\CS\BD$ (assuming 
$\mathbb{H}=-(\BL-\BL_0)\mathbb{S}$ for some $\mathbb{S}\in\mathfrak{H}$).

	\section{Links between Green's functions of different physical problems}
	\setcounter{equation}{0}

In the same way that the theory of exact relations for composites easily provides links between effective tensors, so too does our theory easily provide links 
between the Green's functions of different physical problems. The treatment here adapts the theory of links for composites, given in
\cite{Grabovsky:2000:ERE} Sect. 4.3, to Green's functions.

Consider $m$ different physical problems, each described by an equation that can be expressed
in the form \eq{I.1}:
\beq \BJ^{(i)}(\Bx)=\BL^{(i)}(\Bx)\BE^{(i)}(\Bx)-\Bh^{(i)}(\Bx), \mbox{ with }\BJ^{(i)}\in\CJ^{(i)},\quad\BE^{(i)}\in\CE^{(i)},\quad\Bh^{(i)}\in\CH^{(i)}, \eeq{l.1}
where $i=1,2,\ldots,m$ indexes each different problem, and each field $\BJ^{(i)}(\Bx)$, $\BE^{(i)}(\Bx)$ and $\Bh^{(i)}(\Bx)$ takes values in a tensor
space $\CT^{(i)}$ for every $\Bx\in\RR^d$. The projection $\BGG_1^{(i)}$ onto $\CE^{(i)}$ is assumed to act locally in Fourier space, i.e., if $\BE^{(i)}=\BGG_1^{(i)}\BA^{(i)}$ then the Fourier components $\widehat{\BE}^{(i)}(\Bk)$ and $\widehat{\BA}^{(i)}(\Bk)$
of $\BE^{(i)}$ and $\BA^{(i)}$ are related via $\widehat{\BE}^{(i)}(\Bk)=\BGG_1^{(i)}(\Bk)\widehat{\BA}^{(i)}(\Bk)$ for some operator $\BGG_1^{(i)}(\Bk)$ that projects 
onto a subspace $\CE^{(i)}_{\Bk}\subset\CT$.

We can rewrite this set of equations in the equivalent form
\beq \underbrace{\bpm \BJ^{(1)}(\Bx)\\ \BJ^{(2)}(\Bx)\\ \vdots \\ \BJ^{(m)}(\Bx) \epm}_{\BJ(\Bx)}
=\underbrace{\bpm \BL^{(1)}(\Bx) &  0 & \hdots & 0 \\
      0        & \BL^{(2)}(\Bx) & \hdots & 0 \\
      \vdots   & \vdots   & \ddots & \vdots \\
      0        & 0        & \hdots & \BL^{(m)}(\Bx) \epm}_{\BL(\Bx)}
\underbrace{\bpm \BE^{(1)}(\Bx)\\\BE^{(2)}(\Bx)\\ \vdots \\\BE^{(m)}(\Bx) \epm}_{\BE(\Bx)} -
\underbrace{\bpm \Bh^{(1)}(\Bx)\\ \Bh^{(2)}(\Bx)\\ \vdots \\ \Bh^{(m)}(\Bx) \epm}_{\Bh(\Bx)}, 
\eeq{l.2}
where $\BJ(\Bx)$, $\BE(\Bx)$ and $\Bh(\Bx)$ take values in $\CT=\CT^{(1)}\oplus\CT^{(2)}\oplus\ldots\oplus\CT^{(m)}$, and satisfy
\beq \BE\in\CE, \quad \BJ\in\CJ, \quad \Bh\in\CH,
\eeq{l.2a}
in which $\CJ$ and $\CE$ consist of all those fields $\BJ(\Bx)$ and $\BE(\Bx)$, respectively, taking the form indicated in \eq{l.2}
with component fields $\BJ^{(i)}\in \CJ^{(i)}$ and $\BE^{(i)}\in \CE^{(i)}$, for $i=1,2,\ldots,m$, and $\CH$ is defined by $\CH=\CE\oplus\CJ$. The Green's
function for the system of uncoupled equations \eq{l.2} and\eq{l.2a} takes the form
\beq \BG(\Bx,\Bx^0)=\bpm \BG^{(1)}(\Bx,\Bx^0) &  0 & \hdots & 0 \\
      0        & \BG^{(2)}(\Bx,\Bx^0) & \hdots & 0 \\
      \vdots   & \vdots   & \ddots & \vdots \\
      0        & 0        & \hdots & \BG^{(m)}(\Bx,\Bx^0) \epm,
\eeq{l.3}
in which $\BG^{(i)}(\Bx,\Bx^0)$ denotes the Green's function for the ``$i$-th'' problem. We introduce a constant reference tensor $\BL_0$ 
and a constant tensor $\BM$ that are both assumed to be block diagonal:
\beq \BL_0=\bpm \BL^{(1)}_0 &  0 & \hdots & 0 \\
      0        & \BL^{(2)}_0 & \hdots & 0 \\
      \vdots   & \vdots   & \ddots & \vdots \\
      0        & 0        & \hdots & \BL^{(m)}_0 \epm,\quad \BM=\bpm \BM^{(1)} &  0 & \hdots & 0 \\
      0        & \BM^{(2)} & \hdots & 0 \\
      \vdots   & \vdots   & \ddots & \vdots \\
      0        & 0        & \hdots & \BM^{(m)} \epm.
\eeq{l.4}
Then associated with the decoupled system \eq{l.2} and \eq{l.3} is an operator
\beq \BGY(\Bk)=\underbrace{\bpm \BGY^{(1)}(\Bk) &  0 & \hdots & 0 \\
      0        & \BGY^{(2)}(\Bk) & \hdots & 0 \\
      \vdots   & \vdots   & \ddots & \vdots \\
      0        & 0        & \hdots & \BGY^{(m)}(\Bk) \epm}=\underbrace{\bpm \BM^{(1)}-\BGG^{(1)}(\Bk) &  0 & \hdots & 0 \\
      0        & \BM^{(2)}-\BGG^{(2)}(\Bk) & \hdots & 0 \\
      \vdots   & \vdots   & \ddots & \vdots \\
      0        & 0        & \hdots & \BM^{(m)}-\BGG^{(m)}(\Bk) \epm},
\eeq{l.5}
in which 
\beq \BGG^{(i)}(\Bk)=[\BGG^{(i)}_1(\Bk)\BL^{(i)}_0\BGG^{(i)}_1(\Bk)]^{-1}\BGG^{(i)}_1(\Bk), \eeq{l.5aa}
where $\BGG^{(i)}_1(\Bk)$ is the projection  onto $\CE^{(i)}_\Bk$ and the operator inverse is taken on $\CE^{(i)}_\Bk$.

\ay{As before, we search for subspaces $\CK\subset L(\cal T)$ having the property \eq{K} and associated subspaces $\CS\subset L(\cal T)$ that are 
subspaces having the property that $\BA\cal S \subset \cal K$ for all $\BA\in\cal K$. Due to the special algebraic structure of the problem, the search
for such subspaces simplifies: see Section 4.3 of \cite{Grabovsky:2000:ERE} and pages 5--3 to 5--11 of \cite{Grabovsky:2016:CMM}}. The exact relation then implies that
\beq (\BL(\Bx)-\BL_0)\BG(\Bx,\Bx^0)(\BL(\Bx^0)-\BL_0)\ayy{+\BL_0-\BL}\in\CK \text{ for all } \Bx, \Bx^0, \text{ with }\Bx\ne\Bx^0,
\eeq{l.5a}
and if $\BL^{(i)}(\Bx)$ is self-adjoint for all  $i$ we have additionally that for all nonsingular $\BD\to\BD$,
\beq ((\BL(\Bx)-\BL_0)\BG(\Bx,\Bx^0)(\BL(\Bx^0)-\BL_0)\ayy{+\BL_0-\BL})(\CK\BD)^\perp\subset(\CS\BD)^\perp \text{ for all } \Bx, \Bx^0, \text{ with }\Bx\ne\Bx^0,
\eeq{l.5b}
in which $(\CK\BD)^\perp$ and $(\CS\BD)^\perp$ are the orthogonal complements of $\CK\BD$ and $\CS\BD$ on the space $L(\cal T)$, with respect to the inner product \eqref{20a}.

For these exact relations to a  generate a link between the Green's functions of the different problems it is necessary that $\CK$ not be separable. It is separable
if after some reordering of the indices $i=1,2,\ldots,m$ labeling the $m$ problems there exists a subdivision of the problems such that the exact
relation decouples.  In other words, there exists a $p$, $1<p<m$, such that there are subspaces $\CK_1\subset L({\cal T}_1)$, and
$\CK_2\subset L({\cal T}_2)$, where
\beq \CT_1=\CT^{(1)}\oplus\CT^{(2)}\oplus\ldots\oplus\CT^{(p)},\quad \CT_2=\CT^{(p+1)}\oplus\CT^{(2)}\oplus\ldots\oplus\CT^{(m)}, 
\eeq{l.6}
with $\CK_1$ and $\CK_2$ each having the algebraic properties of an exact relation, and any block diagonal tensor
\beq \BB=\bpm \BB^{(1)} &  0 & \hdots & 0 \\
      0        & \BB^{(2)} & \hdots & 0 \\
      \vdots   & \vdots   & \ddots & \vdots \\
      0        & 0        & \hdots & \BB^{(m)} \epm
\eeq{l.7}
is in $\CK$ if and only if 
\beq \BB_1=\bpm \BB^{(1)} &  0 & \hdots & 0 \\
      0        & \BB^{(2)} & \hdots & 0 \\
      \vdots   & \vdots   & \ddots & \vdots \\
      0        & 0        & \hdots & \BB^{(p)} \epm\in\CK_1~~\text{and}~~
     \BB_2=\bpm \BB^{(p+1)} &  0 & \hdots & 0 \\
      0        & \BB^{(p+2)} & \hdots & 0 \\
      \vdots   & \vdots   & \ddots & \vdots \\
      0        & 0        & \hdots & \BB^{(m)}  \epm\in\CK_2.
\eeq{l.8}
When we say $\CK_1$ and $\CK_2$ have the properties of an exact relation we specifically mean that
for all $\Bk\in\RR^d$ one has
\beq \BA_1\BGY_j(\Bk)\BA_2\in\CK_j,~~{\rm for~all~}\BA_1,\BA_2\in\CK_j,~~j=1,2,
\eeq{l.9}
in which
\beq \BGY_1(\Bk)=\bpm \BGY^{(1)}(\Bk) &  0 & \hdots & 0 \\
      0        & \BGY^{(2)}(\Bk) & \hdots & 0 \\
      \vdots   & \vdots   & \ddots & \vdots \\
      0        & 0        & \hdots & \BGY^{(p)}(\Bk) \epm ~~\text{and}~~
     \BGY_2(\Bk)=\bpm \BGY^{(p+1)}(\Bk) &  0 & \hdots & 0 \\
      0        & \BGY^{(p+2)}(\Bk) & \hdots & 0 \\
      \vdots   & \vdots   & \ddots & \vdots \\
      0        & 0        & \hdots & \BGY^{(m)}(\Bk)  \epm.
\eeq{l.10}

\section{Exact identities satisfied by the DtN map and Boundary Field Equalities: A generalization of conservation laws}	
\setcounter{equation}{0}
\aj{This section generalizes the ideas developed in \cite{Thaler:2014:EDV}, where it was shown how Hill's exact relation in the
theory of composites, could be used to derive exact identities satisfied by the ``Dirichlet-to-Neumann map'' of 
a body $\GO$ containing two elastically isotropic materials with the same shear modulus:
in particular, these identities allow one to exactly deduce the volume fractions occupied 
by the phases from boundary measurements. The key idea was to apply (non-local) boundary conditions on the boundary tractions and displacements on the boundary $\Md\GO$ of $\GO$
in such a way that they mimic the body placed in an appropriate infinite medium with appropriate sources outside.
The exact relations satisfied by the fields in the latter problem imply that the fields inside $\GO$ satisfy these exact relations too, and this
in turn allows one to obtain additional information about the boundary fields: the boundary field equalities.}

\ayy{We recall the conventions of Remark \ref{smooth-assumption}, and our assumption made in the beginning of  Section 5 that $\BL(\Bx)$ is smooth enough. We also mention that here, and in the next section we assume $\Omega$ is sufficiently smooth} and we restrict attention to those equations (having the required canonical form)
for which the response of a body filled by inhomogeneous material and devoid of sources inside
is governed by a ``Dirichlet-to-Neumann map'' (DtN map) $\BGL_\GO$. Symbolically we may
write 
\beq \Md\BJ=\BGL_\GO(\Md\BE), \eeq{b.1}
where $\Md\BE$ \aj{informally} denotes the boundary information associated with the field $\BE$, and
$\Md\BJ$ \aj{informally} denotes the boundary information associated with the field $\BJ$. In the context of the \aj{primary} equations
\eq{b.0}, $\Md\BE$ represents the value of the potential field $\Bu(\Bx)$ at the boundary $\Md\GO$ of $\GO$ and
$\Md\BJ$ represents the value of the flux vector field $\Bn\cdot\BJ(\Bx)$ at the boundary $\Md\GO$, where $\Bn$ is the outwards normal to the surface $\Md\GO$ (assumed smooth).
(Equivalently, for the \aj{primary} equations, $\Md\BE$ can be taken as the tangential values of $\BE(\Bx)$ since integrating these over the surface yields the boundary values of  $\Bu(\Bx)$, up to an
additive constant vector). \aj{The appendix gives further examples of boundary fields $\Md\BJ$ and $\Md\BE$ that are associated with various physical equations, in particular
quasistatic equations. Beyond \eq{b.0} and the equations in the appendix we avoid giving a precise definition of the fields $\Md\BE$ and $\Md\BJ$.}

Specifying $\Md\BE$ determines a unique $\BE$ (and hence $\BJ$) that solve
\beq \BJ(\Bx)=\BL(\Bx)\BE(\Bx),\quad \BJ\in\overline{\CJ}_\GO,\quad \BE\in\overline{\CE}_\GO.
\eeq{b.2}
where \aj{$\overline{\CJ}_\GO$ and $\overline{\CE}_\GO$ are the closures of} the spaces $\CJ_\GO$ and $\CE_\GO$
comprised of those fields $\BJ(\Bx)$ and $\BE(\Bx)$ defined in $\GO$ which can be extended outside $\GO$ in such a way  
that with their extensions they lie in $\CJ$ and $\CE$, respectively. \aj{Note that  $\overline{\CJ}_\GO$ and $\overline{\CE}_\GO$ are not orthogonal, nor even nonintersecting.
Associated with $\BJ(\Bx)$ is the boundary field $\Md\BJ$. As it depends linearly on $\Md\BE$, this linear relation defines the DtN map \eq{b.1}.}

\aj{We make a side remark that, as shown in \cite{Milton:2016:ETC}, Chap.3 \eq{b.2} can be reformulated as an equation in the abstract theory of composites, 
\beq \underbrace{\BL_0^{-1/2}\BJ(\Bx)}_{\widetilde{\BJ}}
=\underbrace{[\BL_0^{-1/2}\BL(\Bx)\BL_0^{-1/2}]}_{\widetilde{\BL}}\underbrace{\BL_0^{1/2}\BE(\Bx)}_{\widetilde{\BE}},
\quad\widetilde{\BJ}\in\widetilde{\CU}\oplus\widetilde{\CE}=\BL_0^{-1/2}\overline{\CJ}_\GO,\quad \widetilde{\BE}\in\widetilde{\CU}\oplus\widetilde{\CE}=\BL_0^{1/2}\overline{\CE}_\GO,
\eeq{b.2a}
where $\BL_0$ can be any positive definite self-adjoint tensor (the ``reference tensor''), and
\beq \widetilde{\CU}\equiv(\BL_0^{1/2}\overline{\CE}_\GO)\cap(\BL_0^{-1/2}\overline{\CJ}_\GO),\quad
\widetilde{\CJ}=\BL_0^{-1/2}{\CJ}_\GO^0,\quad\widetilde{\CE}=\BL_0^{-1/2}{\CE}_\GO^0,
\eeq{b.2b}
in which ${\CJ}_\GO^0$ and ${\CE}_\GO^0$ consist of those fields in $\CJ$ and $\CE$, respectively, that vanish outside $\GO$. The DtN map is then associated with the ``effective operator'' $\BL_*$ mapping $\widetilde{\CU}$ to itself. Fields in $\widetilde{\CU}$ can be uniquely characterized either by the associated value of $\Md\BE$ or by the associated
value of $\Md\BJ$: given $\Md\BE$ there is a map $\Psi$ to a field $\widetilde{\BU}=\Psi(\Md\BE)$ in $\widetilde{\CU}$, and given $\BL_*\widetilde{\BU}\in\widetilde{\CU}$ 
there is map $\Phi$ to $\Md\BJ=\Phi\BL_*\widetilde{\BU}=\Phi\BL_*\Psi(\Md\BE)$. This provides the connection between $\BL_*$ and the DtN map: $\BGL_\GO=\Phi\BL_*\Psi$.
A similar reformulation of \eq{b.2} as a problem in the abstract theory of composites
was made independently by Grabovsky (see \cite{Grabovsky:2016:CMM}, Chap.2) who takes $\BL_0=\BI$ and refers to fields in $\CU$ as ``harmonic functions'' (as they are indeed harmonic functions in the conductivity
problem). We thank Yury Grabovsky for pointing out the need for replacing  $\CJ_\GO$ and $\CE_\GO$ by their closures $\overline{\CJ}_\GO$ and $\overline{\CE}_\GO$ in \eq{b.2},
unless, of course, they are already closed.}


Boundary field equalities may be viewed as exact identities satisfied by the DtN map that are independent
of the precise microstructure inside the body. For the boundary field equalities derived here we need only assume that $\BL(\Bx)\in\CM$
and satisfies the coercivity condition \eq{boundcoer}
for all $\Bx\in\GO$. 

Specifying $\Md\BE$ is one of many possible boundary conditions that uniquely determine the fields inside $\GO$. Another
frequently used one is specifying $\Md\BJ$. A different sort, that we use here, is providing some type of mixed non-local boundary condition
that mimics surrounding the body by infinite homogeneous medium with tensor $\BL_1$ with appropriate sources placed
outside the body. Sources outside the body can be considered as a superposition of localized delta-function sources. So let us
define $\GO^C\equiv \RR^d\setminus\GO$ and consider a single delta function source at $\Bx_0\in\GO^C$.

Then in $\GO^C$ the equations \ay{informally} take the form
\beq \BJ(\Bx)=\BL_1\BE(\Bx)+(\BL_1-\BL_0)\Bs^0\Gd(\Bx-\Bx_0),\quad \BJ\in\CJ_{\GO^C},\quad \BE\in\CE_{\GO^C},
\eeq{b.3}
where $\CJ_{\GO^C}$ and $\CE_{\GO^C}$ are comprised of those fields $\BJ(\Bx)$ and $\BE(\Bx)$ defined outside $\GO$ which can be extended inside $\GO$ in such a way  
that with their extensions they lie in $\CJ$ and $\CE$, respectively. Now one can easily (numerically if not analytically) solve for the problem of a point
source in \ay{an infinite} homogeneous medium \ay{having moduli $\BL_1$}:
\beq \BJ^0(\Bx)=\BL_1\BE^0(\Bx)+(\BL_1-\BL_0)\Bs^0\Gd(\Bx-\Bx_0),\quad \BJ^0\in\CJ,\quad \BE^0\in\CE.
\eeq{b.4}
\ajj{Alternatively, the analysis of this section will go through if we replace the delta function with any square-integrable source that is compactly supported outside $\GO$,
say with $\Bs^0\Gd(\Bx-\Bx_0)$ replaced by $\Bs^0r(\Bx)$ where the scalar valued function $r(\Bx)$ is square integrable and $r(\Bx)=0$ inside $\GO$.}

Subtracting \eq{b.4} from \eq{b.3} gives \ay{informally} 
 \beq \widetilde{\BJ}(\Bx)=\BL_1\widetilde{\BE}(\Bx),\quad \widetilde{\BJ}\in\CJ_{\GO^C},\quad \widetilde{\BE}\in\CE_{\GO^C},\quad\widetilde{\BJ}(\Bx)=\BJ(\Bx)-\BJ^0(\Bx),\quad
\widetilde{\BE}(\Bx)=\BE(\Bx)-\BE^0(\Bx).
\eeq{b.5}
The boundary information $\Md\widetilde{\BJ}$ and $\Md\widetilde{\BE}$ associated with the fields $\widetilde{\BJ}$ and $\widetilde{\BE}$ are
linked by the exterior DtN map $\BGL_{\GO^C}$:  $\BGL_{\GO^C}(\Md\widetilde{\BE})=\Md\widetilde{\BJ}$. As the material outside $\GO$ is 
homogeneous, $\BGL_{\GO^C}$ is in principle computable and will be assumed known. Since the fields outside and inside the body
must be compatible, i.e., share the same boundary information $\Md\BJ$ and $\Md\BE$, we see that $\Md\BE$ and $\Md\BJ$ must be such that \ay{informally}
\beq \Md\BJ-\Md\BJ^0=\BGL_{\GO^C}(\Md\BE-\Md\BE^0),\quad \text{i.e., } \Md\BJ-\BGL_{\GO^C}(\Md\BE)=\Md\BJ^0-\BGL_{\GO^C}(\Md\BE^0),
\eeq{b.6}
in which $\Md\BJ^0$ and $\Md\BE^0$ denote the boundary information associated with $\BJ^0$ and $\BE^0$. Some care needs to be taken. For example
in the conductivity problem in which $\BJ(\Bx)$ is the current, $\Md\BJ$ represents the flux $\BJ\cdot\Bn$ where $\Bn$ is the outward normal
to $\GO$. So in defining $\BGL_{\GO^C}$ it is important that it maps to the flux $\BJ\cdot\Bn$ where again $\Bn$ is the outward normal
to $\GO$, not the outward normal to $\GO^C$. 

Equation \eq{b.6} provides the needed boundary conditions that constrain $\Md\BJ$ and $\Md\BE$. They are not so pleasant as they are
non-local and involve $\BGL_{\GO^C}$. Instead of specifying $\Md\BE$ or $\Md\BJ$ one specifies $\Md\BJ-\BGL_{\GO^C}(\Md\BE)$.
\aj{Supposing that the DtN map $\BGL_\GO$ entering \eq{b.1} has been experimentally measured, 
or numerically calculated, then \eq{b.6} provides the explicit equation
\beq \BGL_{\GO}(\Md\BE)-\BGL_{\GO^C}(\Md\BE)=\Md\BJ^0-\BGL_{\GO^C}(\Md\BE^0) \eeq{b.6a}
that can be solved for $\Md\BE$. The question arises as to whether a solution exists, and if so, is it unique?}
However, it is exactly the same as solving the governing equations \eq{I.1} for
the body $\GO$ surrounded by homogeneous medium with tensor $\BL_1$ and with the source term $\Bh(\Bx)=(\BL_1-\BL_0)\Bs^0\Gd(\Bx-\Bx_0)$, so uniqueness
of $\BE$ (and hence $\Md\BE$ and $\Md\BJ$) is assured.
 
Now instead of considering a single experiment, one can consider $q$ experiments with $\Bs^0$ replaced by $\Bs_1^0,\Bs_2^0,\ldots,\Bs_q^0$, each source
remaining  at $\Bx_0$. The fields $\BE(\Bx)$ and $\BJ(\Bx)$ are then replaced by $\BE_i(\Bx)$ and $\BJ_i(\Bx)$. Let us define, as previously,
\beq \BS^0\Be_i=\Bs^0_i,\quad \mathbb{E}(\Bx)\Be_i=\BE_i(\Bx),\quad \mathbb{J}(\Bx)\Be_i=\BJ_i(\Bx), \quad \mathbb{P}(\Bx)=\mathbb{J}(\Bx)-\BL_0\mathbb{E}(\Bx),
\eeq{b.7}
and let $\Md\mathbb{E}$ and $\Md\mathbb{J}$ represent the boundary information associated with $\mathbb{E}(\Bx)$ and $\mathbb{J}(\Bx)$ respectively.
If $\Bs_1^0,\Bs_2^0,\ldots,\Bs_q^0$ are chosen so that $\BS^0\in\CS\BD$ for some nonsingular $\BD: \CT\to\CT$ then our theorem implies that
$\mathbb{P}(\Bx)\in\CK\BD$ for all $\Bx\in\GO$. If $\CS$ contains a non-singular element $\BS^1$ then there is no restriction on $\BS^0$ as we can
choose $\BD=(\BS^1)^{-1}\BS^0$. This then constrains $\mathbb{J}(\Bx)-\BL_0\mathbb{E}(\Bx)$ to lie in $\CK\BD$
for all $\Bx\in\GO$. This should then naturally provide constraints on the boundary information $\Md\mathbb{E}$ and $\Md\mathbb{J}$,
thus yielding exact identities satisfied by the DtN map. \aj{The examples in the next section demonstrate this explicitly.}
These exact identities must be satisfied for every choice of nonsingular $\BD: \CT\to\CT$, every choice 
of $\Bx_0$ outside $\GO$, and
for every choice of $\BS^0\in\CS\BD$. \aj{It seems unlikely that the set of exact identities produced as $\Bx_0$ varies outside $\GO$ will
all be independent. Rather it is probably the case that a source at $\Bx_0$ outside $\GO$ has the same effect as a set of sources
around the boundary of $\GO$, implying that it suffices to take sources restricted to the boundary of $\GO$. (Technically, one should
take sources just outside $\GO$ and let them approach the boundary).}

If $\BL(\Bx)$ is self-adjoint one has the additional constraint that if
$\BS^0\in(\CK\BD)^\perp$ then $\mathbb{J}(\Bx)-\BL_0\mathbb{E}(\Bx)$ must lie in $(\CS\BD)^\perp$ for all $\Bx\in\GO$, and
this then implies additional constraints on the boundary information $\Md\mathbb{E}$ and $\Md\mathbb{J}$.

Depending on the nature of the exact relations, boundary field equalities may involve the volume fraction of one phase in a body containing two phases. 
and thus allow the volume fraction to be exactly determined (e.g., see \cite{Thaler:2014:EDV}).

Note that the constitutive law inside the body $\GO$ that $\mathbb{J}(\Bx)=\BL(\Bx)\mathbb{E}(\Bx)$ for some $\BL(\Bx)$ such that $W_{\BM}(\BL(\Bx))\in\CK\BD$
for all $\Bx$ [or more generally that $\mathbb{J}(\Bx)=\BL(\Bx)\mathbb{E}(\Bx)+(\BL(\Bx)-\BL_0)\mathbb{S}(\Bx)$ inside $\GO$ for some
$\BL(\Bx)$ with $W_{\BM}(\BL(\Bx))\in\CK$ and for some $\mathbb{S}(\Bx)\in\CS$] can just be viewed as a non-linear constraint on the fields $\mathbb{J}$
and $\mathbb{E}$ inside $\GO$. This constraint, coupled with the appropriate boundary conditions, allows one to deduce the boundary field
equalities that constrain the boundary information $\Md\mathbb{E}$ and $\Md\mathbb{J}$. From this perspective our analysis immediately
applies to a large range of non-linear problems. \aj{This is developed further in the next section}.
 

\section{A alternative view of some Boundary Field Equalities}	
\setcounter{equation}{0}

As an interesting illustration of boundary field inequalities, expressed in a different form, let us focus on the equations \eq{b.0}
with initially $h_{i\Ga}(\Bx)=0$ within $\GO$. We will concentrate on cases $d=2$ and $d=3$ since these are of greatest practical importance.
First, starting in the case $d=2$, and defining new fields, $\widetilde{J}_{j\Gb}(\Bx)$, 
with components given by
\beq \widetilde{J}_{1\Gb}(\Bx)=-E_{2\Gb}(\Bx),\quad \widetilde{J}_{2\Gb}(\Bx)=E_{1\Gb}(\Bx), \eeq{y.1}
the equations \eq{b.0} can be rewritten as 
\beq \BJ(\Bx)=\BL(\Bx)\CR^\perp\widetilde{\BJ}(\Bx),\quad \Div\BJ=0,\quad \Div\widetilde{\BJ}=0, \eeq{y.2}
with $\CR^\perp$ being a 4-index tensor with elements
\beq  \{\CR^\perp\}_{i\Gb j\Gg}=\Gd_{\Gb\Gg}\{\BR^\perp\}_{ij},\quad \BR^\perp=\bpm 0 & 1 \cr -1 & 0 \epm,
\eeq{y.3}
where $\BR^\perp$ is the matrix for a  $90^\circ$ rotation. Thus,
the action of $\CR^\perp$ on $\widetilde{\BJ}(\Bx)$ is to rotate its columns by $90^\circ$, each column being a divergence-free 
field, to give $\BE(\Bx)$, each column then being a curl-free field. We can also consider $2m$ solutions to \eq{y.2}
labeled by an index pair $k\nu$, $k=1,2$, $\nu=1,2,\ldots,m$, with fields $\BJ^{k\nu}(\Bx)$ and $\widetilde{\BJ}^{k\nu}(\Bx)$
each satisfying \eq{y.2}, so that the resulting set of equations can then be rewritten as
\beq \mathbb{J}(\Bx)=\BL(\Bx)\CR^\perp\widetilde{\mathbb{J}}(\Bx), \quad \Div\mathbb{J}=0, \quad \Div\widetilde{\mathbb{J}}(\Bx)=0,
\eeq{y.4}
where $\mathbb{J}(\Bx)$ and $\widetilde{\mathbb{J}}(\Bx)$ are four-index fields with elements
\beq \mathbb{J}_{i\Ga k\nu}(\Bx)={J}_{i\Ga}^{k\nu}(\Bx),\quad \widetilde{\mathbb{J}}_{i\Ga k\nu}(\Bx)=\widetilde{J}_{i\Ga}^{k\nu}(\Bx),
\eeq{y.5}
and the divergence acts on the first index of these fields. The constitutive law in \eq{y.4}, the boundedness and coercivity
of $\BL$, and the fact that $\BL(\Bx)$ is constrained to take values in $\CM$, 
can be replaced by the constraint that $\BQ(\Bx)$, defined to have elements
\beq Q_{i\Ga k\nu 1}(\Bx)=\mathbb{J}_{i\Ga k\nu}(\Bx),\quad Q_{i\Ga k\nu 2}(\Bx)=\widetilde{\mathbb{J}}_{i\Ga k\nu}(\Bx),
\eeq{y.6}
takes values in a set $\CB$, where a 5-index tensor $\BQ$ is defined to lie in $\CB$ if and only if there exists a four-index tensor $\BL\in\CM$
satisfying the boundedness and coercivity constraint that $\Gb_0\BI\geq\BL\geq\Ga_0\BI>0$, and
\beq Q_{i\Ga k\nu 1}=L_{i\Ga j\Gb}R^\perp_{j\Gb \ell\Gg}Q_{\ell\Gg k\nu 2}, \eeq{y.7}
where (following the Einstein summation convention) sums over repeated indices are assumed. If the manifold $\CM$ has dimension $m_0$
then $\CB$ will be a $r$-dimensional object (in a $8m^2$-dimensional space) with $r=4m^2+m_0$. 

At points $\Bx$ where $\mathbb{J}(\Bx)$ is non-singular
this constraint that $\BQ(\Bx)\in\CB$ is equivalent to requiring that the fields satisfy the clearly non-linear constraint,
\beq \BL(\Bx)\equiv -\mathbb{J}(\Bx)[\widetilde{\mathbb{J}}(\Bx)]^{-1}\CR^\perp\in\CM, \eeq{y.8}
and that $\BL(\Bx)$ thus defined satisfies the boundedness and coercivity condition \eq{boundcoer}.
We can of course lump all the last four indices of $\BQ(\Bx)$ into a single index $\Gg$ taking values from
$1$ to $s=4m^2$, and then regard $\BQ(\Bx)$ as a $2\times s$ matrix valued field satisfying $\Div\BQ=0$,
i.e. the columns of $\BQ$ are then divergence-free vector fields. The mapping of the last four indices onto $\Gg$ can be chosen
so the first $2m^2$ columns of $\BQ$ (are associated with $\mathbb{J}$ while the last  $2m^2$ columns of $\BQ$ are associated with
$\widetilde{\mathbb{J}}$. We let $\BQ_1$ and $\BQ_2$ denote the $2\times 2m^2$-matrix valued fields formed by the first and last
set of $2m^2$ columns, so these are associated with $\mathbb{J}$ and $\widetilde{\mathbb{J}}$, respectively.
Associated with $\BQ(\Bx)$, $\BQ_1(\Bx)$, and $\BQ_2(\Bx)$ are then
also the fluxes $\Bq(\Bx)=\Bn(\Bx)\cdot\BQ(\Bx)$, $\Bq_1(\Bx)=\Bn(\Bx)\cdot\BQ_1(\Bx)$, $\Bq_2(\Bx)=\Bn(\Bx)\cdot\BQ_2(\Bx)$
at the boundary $\Md\GO$, where $\Bn$ is the outwards normal. Thus the first and last $2m^2$ elements of $\Bq(\Bx)$ give
$\Bq_1(\Bx)$ and $\Bq_2(\Bx)$ respectively.

The boundary information for the equations \eq{b.0} is usually taken as the values of the potential $\Bu$ and
flux $\Bn\cdot\BJ$, where $\Bn$ is the surface normal. However, as noted in the previous section, specifying the tangential value of $\Grad\Bu$ 
at the surface allows one to recover $\Bu$ (up to a trivial constant), and thus is an equivalent condition.
So we see that instead of using the potential $\Bu$ as our boundary information one can use $\Bn\cdot\widetilde{\BJ}$.
Then \eq{b.6} can be interpreted as a non-local, linear constraint on the boundary flux $\Bq$. It can be rewritten as
\beq \Bq_1-\BGL_{\GO^C}(\Bq_2)=\Bq_1^0-\BGL_{\GO^C}(\Bq_2^0), \eeq{y.9}
where $\Bq_1^0$ and $\Bq_2^0$ are the fluxes across $\Md\GO$ associated with solving the equations in an infinite homogeneous medium with constant tensor $\BL_1\in\CM$
with a source $\mathbb{S}(\Bx)$ with support outside $\GO$ taking values in $\CS\BD$, and with $\BGL_{\GO^C}$ 
being the exterior DtN map associated with this medium. 
Our theorem then forces $\mathbb{J}(\Bx)-\BL_0\mathbb{E}(\Bx)$ within $\GO$ to lie in $\CK\BD$ and this constraint can alternatively be 
written as $\BQ(\Bx)\in\CD$  implying $\Bq(\Bx)\in\Bn(\Bx)\CD$ . To precisely define $\CD$  one can return to the representation 
where $\BQ$ is a 5-index tensor. Then $\BQ\in\CD$  if and only if $\BZ\in\CK\BD$ where $\BZ$ is the four index tensor with elements
\beq Z_{i\Ga s\nu}=Q_{i\Ga s\nu 1}(\Bx)-\{\BL_0\}_{i\Ga j\Gb}R^\perp_{j\Gb k\Gg}Q_{k\Gg s\nu 2}.
\eeq{y.10}
The constraint that $\Bq(\Bx)\in\Bn(\Bx)\CD$  for all $\Bx\in\Md\GO$ is a boundary field equality.

We can relax the constraint that $\mathbb{S}(\Bx)$ is zero inside $\GO$, and given $\BD$, $\BL_0$, $\Ga_0$, $\Gb_0$
redefine $\CB$ so that a 5-index tensor $\BQ$ is in $\CB$ if and only if there exists an $\BS\in\CS\BD$ and a
$\BL\in\CM$ with $\Gb_0\BI\geq\BL\geq\Ga_0\BI>0$, such that
\beq Q_{i\Ga s\nu 1}+\{\BL_0\}_{i\Ga j\Gb}S_{j\Gb s\nu}=L_{i\Ga j\Gb}[R^\perp_{j\Gb k\Gg}Q_{k\Gg s\nu 2}+S_{j\Gb s\nu}], \eeq{y.11}
where sums over repeated indices are assumed. Again our theorem implies that with the boundary conditions
\eq{y.9} we have the boundary field equality that $\Bq(\Bx)\in\Bn(\Bx)\CD$  for all $\Bx\in\Md\GO$, where $\CD$  is
defined as before. 

Of course, $\BQ(\Bx)$ being divergence-free field in a simply connected two-dimensional region $\GO$ is equivalent to the existence of a $s$-component potential $\Bw(\Bx)$
such that 
\beq  \BQ(\Bx)=\BR^\perp\Grad\Bw.
\eeq{0_1}
The non-local linear constraints on the boundary flux $\Bq$ can then be rephrased as non-local linear constraints on $\Bw(\Bx)$ at the boundary $\Bx\in\Md\GO$.
The result that $\BQ(\Bx)\in\CD$ then implies $\Grad\Bw(\Bx)\in\CC\equiv\BR^\perp\CD$, and this not only constrains the tangential derivatives of the surface potential,
but also implies the additional boundary field equalities \eq{b.0f}.

Now consider the three dimensional case. Then a 
vector field $\Be(\Bx)$ is the gradient of a potential $u(\Bx)$ in $\GO$ if and only if the $3\times 3$ antisymmetric matrix valued field
\beq \BA(\Bx)=\bpm 0 & -e_3(\Bx) & e_2(\Bx) \cr 
                  e_3(\Bx) &  0 & -e_1(\Bx) \cr
                 - e_2(\Bx)& e_1(\Bx) & 0 \epm,
\eeq{ca.3}
is divergence-free. The requirement that $\BA(\Bx)$ be  antisymmetric can be viewed as constraining it to lie in a 3-dimensional subspace.
Knowing the boundary value of the flux $\Bn(\Bx)\cdot\BA(\Bx)$ (where $\Bn(\Bx)$ is the outward normal to $\Md\GO$) gives the tangential
components of $\Be(\Bx)$, which can be integrated to give $\Bu(\Bx)$ for $\Bx\in\Md\GO$. 
Thus, say, with a constitutive law $\mathbb{J}(\Bx)=\BL(\Bx)\mathbb{E}(\Bx)$ with $\mathbb{E}=\Grad\mathbb{U}$ where $\mathbb{U}(\Bx)$ is represented as a $3m^2$-component potential, 
and $\mathbb{J}(\Bx)$ is represented as a divergence-free $3\times 3m^2$ matrix-valued field, we can replace the $3\times 3m^2$ matrix-valued $\mathbb{E}$ by 
a $3\times 3\times 3m^2$ three-index field $\mathbb{A}$ that is antisymmetric in the first pair of indices. Then the constitutive relation
with the restriction that $\BL(\Bx)\in\CM$, and that $\BL(\Bx)$ is bounded and coercive, can be replaced by a nonlinear restriction $\BQ(\Bx)\in\CB$ involving 
a subset $\CB$ (independent of $\Bx$) of a non-linear manifold, and a divergence-free $3\times s$ matrix valued field $\BQ(\Bx)$, where $s=12m^2$
with components comprised of the $3\times 3m^2$ components of $\mathbb{J}(\Bx)$ and
the $3\times 9m^2$ components of $\mathbb{A}(\Bx)$. ($\CB$ is defined so that it ensures $\mathbb{A}$ is antisymmetric in the first pair of indices.)
The boundary information is contained in $\Bq(\Bx)=\Bn\cdot\BQ(\Bx)$ that is restricted to satisfy a non-local, linear, relation
of the form \eq{y.9} where the components of $\Bq_1$ represent the components of $\Bn\cdot\mathbb{J}$ and the components of $\Bq_2$ represent
the components of $\Bn\cdot\mathbb{A}$, and when appropriately defined the first $3m^2$ elements of $\Bq$ give $\Bq_1$, while the remaining last $9m^2$ elements of $\Bq$ give $\Bq_2$.
Our result that $\mathbb{J}(\Bx)-\BL_0\mathbb{E}(\Bx)\in \CS\BD$ for all $\Bx\in\GO$ then again implies $\BQ(\Bx)\in\CD$ , for some appropriately defined subspace $\CD$ .
We thus obtain the boundary field equality that $\Bq(\Bx)\in\Bn(\Bx)\cdot\CD$ .

Similarly, in three dimensions, a current field $\Bj(\Bx)$ satisfying $\Div\Bj(\Bx)=0$ can be expressed in terms of the curl of some vector potential $\Bg(\Bx)$,
or equivalently in terms of the antisymmetric part of $\Grad\Bg$. The values of $\Bn(\Bx)\cdot\Bj(\Bx)$ only provide partial information about the tangential
values of $\Grad\Bg(\Bx)$ at the boundary of $\Md\GO$ and these are insufficient to determine, by integration, $\Bg(\Bx)$ for $\Bx\in\Md\GO$. However, 
we can think of prescribing the boundary value of $\Bg(\Bx)$ for $\Bx\in\Md\GO$, which then allows one
to determine $\Bn(\Bx)\cdot\Bj(\Bx)$. More generally, with say $\mathbb{J}(\Bx)=\BL(\Bx)\mathbb{E}(\Bx)$ where $\mathbb{E}=\Grad\mathbb{U}$
and $\mathbb{U}(\Bx)$ is a $3m^2$-component potential, the $3\times 3m^2$ matrix-valued 
field $\mathbb{J}(\Bx)$ satisfying $\Div\mathbb{J}=0$ can be replaced
by the appropriate components of $\Grad\mathbb{G}(\Bx)$, for some $3\times 3m^2$ matrix-valued potential $\mathbb{G}(\Bx)$.
Instead of a relation like \eq{y.9} we obtain a restriction on the boundary potentials like
\beq \mathbb{U}-\BGL_{\GO^C}(\mathbb{G})=\mathbb{U}^0-\BGL_{\GO^C}(\mathbb{G}^0), \eeq{yyy.9}
for some appropriately defined exterior DtN map $\BGL_{\GO^C}$.
Thus, instead of relations involving the components of a divergence-free $3\times 12m^2$ matrix valued field $\BQ(\Bx)$,
one can express everything in terms of $\Grad\Bw$ where $\Bw(\Bx)$ is a $t$ component potential, where $t=12m^2$, comprised of the $3m^2$ elements of $\mathbb{U}$ and the $9m^2$ elements
of $\mathbb{G}(\Bx)$. The restriction that $\BQ(\Bx)\in\CB$ is then equivalent to a non-linear local restriction $\Grad\Bw\in\CA$. The result that
$\BQ(\Bx)\in\CD$ , can be rewritten as $\Grad\Bw\in\CC$ for some appropriately defined subspace $\CC$, and the boundary field equality 
that $\Bq(\Bx)\in\Bn(\Bx)\cdot\CD$  implies restrictions on the tangential derivatives of $\Bw$ at the surface $\Md\GO$, and also implies the additional
boundary field equality \eq{b.0f}.

Just as in two-dimensions, one can redefine $\CB$ (and hence $\CA$) to allow for a weaker restriction of the form \eq{y.11}.

\section{Conjectured extension of exact relations to some non-linear minimization problems}	
\setcounter{equation}{0}
The canonical equations \eq{I.1} are the Euler-Lagrange equations for the minimizer $\underline{\BE}=\BE$ of the
following minimization
problem
\beq \inf_{\underline{\BE}\in\CE}\lang\BL\underline{\BE}-2\Bh,\underline{\BE}\rang_\CH.
\eeq{m.1} 
Similarly the extended equations \eq{extgov} are the Euler-Lagrange equations  for the minimizer $\underline{\mathbb{E}}=\mathbb{E}$  
of the following minimization problem
\beq \inf_{\underline{\mathbb{E}}\in\mathfrak{E}}\lang\BL\underline{\mathbb{E}}+2(\BL-\BL_0)\mathbb{S},\underline{\mathbb{E}}\rang_{\mathfrak{H}},
\eeq{m.2}
where the inner product on $\mathfrak{H}=\mathfrak{E}\oplus\mathfrak{J}$ is defined by \eq{ip}.
Now for each point $\Bx$ let $\CG(\Bx)$ denote a subset of the manifold $\CM$ that is closed under homogenization.
This is guaranteed if given any $\By$-periodic function $\BL(\By)\in\CG(\Bx)$ for all $\By\in\RR^d$, then the associated effective tensor $\BL^*$
also lies in $\CG(\Bx)$. (That it suffices to consider periodic functions was established in \cite{Allaire:2000:SOH,Raitums:2001:LRC}). We also assume 
that $\CG(\Bx)$ varies smoothly with $\Bx$. Then consider the following double minimization problem
\beq \inf_{\begin{matrix}\BL\\\BL(\Bx)\in\CG(\Bx),\,\forall\Bx\end{matrix}}\inf_{\mathbb{E}\in\mathfrak{E}}\lang\BL\mathbb{E}+2(\BL-\BL_0)\mathbb{S},\mathbb{E}\rang_{\mathfrak{H}}.
\eeq{m.3}
Following the ideas of Kohn \cite{Kohn:1991:RDW} we switch the order of taking infimums to get the nonlinear minimization problem:
\beq \inf_{\underline{\mathbb{E}}\in\mathfrak{E}}\int_{\BR^d}W(\Bx,\underline{\mathbb{E}}(\Bx))\,d\Bx,
\eeq{m.4}
where for all $\Bx\in\RR^d$ and all $\BB\in L(\cal T)$,
\beq  W(\Bx,\BB)=\inf_{\BA\in\CG(\Bx)}\lang\BA\BB+2(\BA-\BL_0)\mathbb{S}(\Bx),\BB\rang_{L(\cal T)}.
\eeq{m.5} 
One can view the nonlinear ``energy function'' $W(\Bx,\BB)$ as being the infimum of a continuous set of
quadratic functions. If $\mathbb{S}(\Bx)$ is smooth one expects the problem \eq{m.4} to have a smooth minimizer
$\underline{\mathbb{E}}=\mathbb{E}$. Otherwise, if a minimizing sequence $\underline{\mathbb{E}}_j, j=1,2,\ldots$ develops
fine-scale oscillations, it would be indicative that associated sequence of tensor fields $\BL(\Bx)=\BL_j(\Bx)$ that are a minimizing
sequence for \eq{m.3} also develops fine-scale microstructure as $j$ increases. The development
of such fine-scale microstructure indicates that composite materials are needed in the construction, having some homogenized effective tensor
$\BL^*(\Bx)$.  However because $\CG(\Bx)$ is closed under homogenization we should be able to just directly use
materials in $\CG(\Bx)$ whose tensor matches that of the desired effective tensor $\BL^*(\Bx)$. In this way fine-scale oscillations in $\BL_j(\Bx)$ and
hence $\underline{\mathbb{E}}_j(\Bx)$ (with $j$ large) can be removed.  

This argument strongly suggests that there should be a minimizer of \eq{m.4} and that minimizer 
$\underline{\mathbb{E}}(\Bx)=\mathbb{E}(\Bx)$ should satisfy the nonlinear 
Euler-Lagrange equations:
\beq \mathbb{J}(\Bx)=\frac{\Md W(\Bx,\BB)}{\Md\BB}\bigg|_{\BB=\mathbb{E}(\Bx)},\quad
\mathbb{J}\in\mathfrak{J},\quad \mathbb{E}\in\mathfrak{E}. 
\eeq{m.6}
Then from  \eq{m.5} it follows that there exists some $\BL(\Bx)\in\CG(\Bx)\subset\CM$ such that
\beq \mathbb{J}(\Bx)=\BL(\Bx)\mathbb{E}(\Bx)+(\BL(\Bx)-\BL_0)\mathbb{S}(\Bx).
\eeq{m.7}
Indeed, a tangent plane to a minimum of a set of quadratic functions must be tangent to at least one of them.
If $\mathbb{S}(\Bx)$ entering \eq{m.5} takes values in $\CS\BD$ for some nonsingular $\BD: \CT\to\CT$ we conclude that 
\beq \mathbb{P}(\Bx)=\mathbb{J}(\Bx)-\BL_0\mathbb{E}(\Bx)
\eeq{m.8}
necessarily takes values in $\CK\BD$. Also if all tensors in $\CG(\Bx)$ are self-adjoint for all $\Bx$, then if 
$\mathbb{S}(\Bx)$ takes values in $(\CK\BD)^\perp$, $\mathbb{P}(\Bx)$ necessarily takes values in $(\CS\BD)^\perp$.
\section{Conclusions}	
\setcounter{equation}{0}
We have laid the foundations of the theory of exact relations for wide classes of linear partial differential equations, 
where the coefficients are position dependent and satisfy appropriate non-linear constraints, and have
derived exact relations satisfied by their Green's functions. Similar to the theory of exact relations in composites, 
it all boils down to finding subspaces $\CK\subset L(\cal T)$ satisfying
\eq{0.1} and associated subspaces $\CS\subset L(\cal T) $ such that $\BA\cal S \subset \cal K$ for all $\BA\in\cal K$. 
The algebraic search for such subspaces $\CK$ or, more precisely the subspaces $\CK_0$ consisting of symmetric tensors in $\CK$,
has been intensively studied by Grabovsky, Sage, and subsequent coworkers in the case when
$\BGY(\Bk)$ is only a function of $\Bk/|\Bk|$. Their progress is summarized in \cite{Grabovsky:2004:AGC,Grabovsky:2016:CMM}. To apply
their results to our setting a relatively easy task remains, namely to identify the subspaces $\CK$ and $\CS$ associated with each $\CK_0$.
\aj{We caution, though, that some of these results are for equations such as thermoelasticity where the field $\BE(\Bx)$ has constant
components, independent of $\Bx$, such as the temperature increment $\Gt$. Then $\BE(\Bx)$ is not square integrable, and our analysis does not apply
in its current form. It may apply if we expand the constitutive law and treat those terms involving constant fields as source terms.
Thus, for example, consider the thermoelastic equation
\beq \BGe(\Bx)=\BCS(\Bx)\BGs(\Bx)+\BGa(\Bx)\Gt,\quad \BGe=[\Grad\Bu+(\Grad\Bu)^T]/2, \quad\Div\BGs=0, \eeq{c.0}
where $\BGe(\Bx)$ is the strain, $\Bu(\Bx)$ is the displacement field, $\BGs(\Bx)$ is the stress, $\Gt=T-T_0$ is the change in temperature $T$ 
measured from some base temperature $T_0$,
$\BCS(\Bx)$ is the compliance tensor (inverse elasticity tensor) and $\BGa(\Bx)$ is the tensor of thermal expansion. Provided $\BGa(\Bx)$
is square integrable, we can treat $\Bh(\Bx)=-\BGa(\Bx)\Gt$ as our source term, $\BGs(\Bx)$ as the field $\BE(\Bx)$, and $\BGe(\Bx)$ as the field 
$\BJ(\Bx)$. Of course, there could be additional source terms, not just those arising from the thermal expansion.}

For many other equations of interest, such as wave equations in lossy media, $\BGY(\Bk)$ for $\Bk\ne 0$ has the form
\beq \BGY(\Bk)=\frac{1}{q(\Bk)}\sum_{j=0}^p|k|^j\BGY_j(\Bk/|\Bk|), \eeq{c.1}
where $q(\Bk)$ is scalar valued, and so \eq{0.1} holds for all $\Bk\ne 0$ if and only if for all unit vectors $\Bn$, and for $j=0,1,\ldots,p$,
\beq \BA_1\BGY_j(\Bn)\BA_2\in\CK,~~{\rm for~all~}\BA_1,\BA_2\in\CK.
\eeq{c.2}
Thus we are confronted with algebraic questions that are essentially the same as those investigated by Grabovsky, Sage, and subsequent coworkers.
The next step will be to search for specific examples of physical relevance, beyond those encountered in the theory of exact relations for composites.
\aj{Irrespective of whether such exact relations exist for wave-equations in lossy media it is to be emphasized that the current theory already applies
to many of the examples studied by Grabovsky, Sage and coworkers as summarized in the book \cite{Grabovsky:2016:CMM}}.

We also made large strides in developing the theory of ``boundary field equalities'', and provided for the first time a general theory for exact
relations satisfied by the DtN map for bodies containing appropriate inhomogeneous media. Again, examples are needed to illuminate the theory
and bring our results from an abstract setting to practice. Our boundary field equalities relied heavily on the existence of an appropriate constitutive law inside the body,
or at least by imposing non-linear constraints on fields so they could be related by an appropriate constitutive law, where the tensor $\BL(\Bx)$ entering the constitutive law
depends upon the local fields. An interesting question is: Are there boundary field equalities, beyond the standard conservation laws, that do not require this of the
interior fields? For example, in the setting of Section 8 one may ask about the existence of sets $\CB$ and appropriate boundary conditions in the two-dimensional
case where $\BQ$ is a divergence-free $2\times s$ matrix valued field and $s$ is not the square of an even integer, and in the three dimensional case
where $\BQ$ is a divergence-free $2\times s$ matrix valued field and $s/3$ is not the square of an even integer. (Of course, one may add to $\BQ$ an 
arbitrary number of ``dummy'' divergence-free columns that do not participate in the analysis, so one would want to exclude such trivial manipulations).

In our analysis we made heavy use of the tensor $\BL_0$, but the Green's function $\BG(\Bx,\Bx^0)$ is independent of $\BL_0$. We could have replaced
$\BL_0$ by any other tensor $\BL_0'$ on the manifold $\CM=W_\BM^{-1}(\CK_0)$ (where $\CK_0$ consists
of all self-adjoint maps in $\CK$) and the analysis would have carried through. This raises the question, which we have not
explored, as to whether this different choice of $\BL_0$ would yield new constraints on $\BG(\Bx,\Bx^0)$, or just recover those obtained with the original choice
of $\BL_0$.

\section*{Acknowledgements}
GWM is grateful to the National Science Foundation for support through the Research Grants DMS-1211359 and DMS-1814854. Both authors thank the Institute for Mathematics and its Applications
at the University of Minnesota for hosting their visit there during the Fall 2016 where this work was initiated as part of the program on Mathematics and Optics. Yury Grabovsky and the referees
are thanked for their comments which led to significant improvements of the manuscript.

\section*{Conflict of Interest}
On behalf of all participating authors, the corresponding author states that there is no conflict of interest. 
\bibliographystyle{plain}
\bibliography{/home/milton/tcbook,/home/milton/newref}
\section*{Appendix: Some physical equations that can be expressed in the required canonical form}

Here we give examples of some physical equations that can be expressed in the required canonical form. The examples are
by no means comprehensive: for further examples, see \cite{Milton:2002:TOC} Chap.2, \cite{Milton:2016:ETC} Chap.1, \cite{Milton:2013:SIG,Milton:2015:ATS} 
and the appendix of \cite{Milton:2018:NRF}. In the equations that follow we omit the source terms. We emphasize that if one is interested in
exact relations satisfied by the DtN-map and the associated boundary field equalities, then it is not necessary that the source terms have a
physical significance.

The simplest canonical equations are those of electrical conductivity 
\beq \underbrace{\Bj(\Bx)}_{\BJ(\Bx)}=\underbrace{\BGs(\Bx)}_{\BL(\Bx)}\underbrace{\Be(\Bx)}_{\BE(\Bx)},\quad \quad \Div\Bj=0, \quad\quad \Be=-\Grad V=0,
\eeq{a1}
where $\Bj(\Bx)$ and $\Be(\Bx)$ are the electrical current and electric field and $V(\Bx)$ is the electrical potential. The boundary fields
$\Md\BJ$ and $\Md\BE$ are the flux $\Bn\cdot\Bj(\Bx)$ and boundary voltage $V(\Bx)$, respectively, with  $\Bx\in\Md\GO$ and $\Bn$ being the outwards normal to $\GO$.
As displayed  in the table at the beginning of section 2.1 in \cite{Milton:2002:TOC} 
(adapted from one of Batchelor \cite{Batchelor:1974:TPT}) the equations for 
dielectrics, magnetostatics, heat conduction, particle diffusion, flow in porous media, and antiplane elasticity
all take the same form as \eq{a1} and so any analysis applicable to \eq{a1} applies to them as well. 

Another important example is that of linear elasticity,
\beq \underbrace{\BGs(\Bx)}_{\BJ(\Bx)}=\underbrace{\BCC(\Bx)}_{\BL(\Bx)}\underbrace{\BGe(\Bx)}_{\BE(\Bx)},~~~~\Div\BGs=0,~~~\BGe=[\Grad\Bu+(\Grad\Bu)^T]/2,
\eeq{a3}
where $\BGs(\Bx)$ (not to be confused for the conductivity tensor field) is the stress, $\BGe(\Bx)$ is the strain, $\Bu(\Bx)$ is the
displacement, and $\BCC(\Bx)$ is the elasticity tensor field. The boundary fields
$\Md\BJ(\Bx)$ and $\Md\BE(\Bx)$ are the traction $\Bn\cdot\BGs(\Bx)$ and boundary displacement field $\Bu(\Bx)$, respectively.

It is also possible to have equations that couple fields together, such as the magnetoelectric equations,
\beq   
\underbrace{\bpm \Bd \cr \Bb \epm}_{\BJ(\Bx)}=\underbrace{\bpm \BGve & \BGb \cr \BGb^T & \BGm \epm}_{\BL(\Bx)}\underbrace{\bpm \Be \cr \Bh\epm}_{\BE(\Bx)},\quad\Div\Bd=\Div\Bb=0,\quad
\Be=-\Grad V,\quad \Bh=-\nabla\psi,
\eeq{a4}
where $\Bd$ and $\Bb$ are the electric displacement field and magnetic induction, $\Be$ and $\Bh$ are the electric and magnetic fields,
$V$ and $\psi$ are the electric potential and magnetic scalar potential (assuming there are no free currents),
$\BGve$ is the free-body electrical permittivity (with $\Bh=0$), $\BGb(\Bx)$ is the second-order magnetoelectric
coupling tensor, $\BGm(\Bx)$ is the free-body magnetic permeability (with $\Be=0$). The boundary fields
$\Md\BJ(\Bx)$ and $\Md\BE(\Bx)$ are then the flux pair \aj{$(\Bn\cdot\Bd(\Bx),\Bn\cdot\Bb(\Bx))$} and the
potential pair $(V(\Bx),\psi(\Bx))$, respectively, with $\Bx\in\Md\GO$.

Thermoelectricity also takes this form, but one has to be careful in defining the fields to ensure that the associated tensor
$\BL(\Bx)$ is symmetric (see, e.g., Section 2.4 in \cite{Milton:2002:TOC}).

Fields that are coupled together need not have the same tensorial rank, an example being the equations of piezoelectricity,
\beq \underbrace{\bpm\BGe \cr \Bd\epm}_{\BJ(\Bx)}=\underbrace{\bpm\BCS & \BCD \cr \BCD^T & \BGve\epm}_{\BL(\Bx)}
\underbrace{\bpm \BGs \cr \Be \epm}_{\BE(\Bx)},
\eeq{a5.1}
where $\BCS(\Bx)$ is the compliance tensor under short-circuit boundary conditions
(i.e.,  with $\Be=0$), $\BCD(\Bx)$ is the piezoelectric stress coupling tensor,
and $\BGve(\Bx)$ is the free-body dielectric tensor (i.e.,  with $\BGj=0$). The
strain field $\BGe$, electric displacement field $\Bd$, stress field $\BGs$, and
electric field $\Be$ satisfy the usual differential constraints:
\beq \BGe  =  [\Grad\Bu+(\Grad\Bu)^T]/2,\quad \Div\Bd  =  0,\quad \Div\BGs  =  0,\quad \Be  = -\Grad V. \eeq{a5.2}
Since the stresses and strains are symmetric matrices, $\BCD$ is a third-order
tensor that maps vectors to symmetric matrices. The boundary fields $\Md\BJ(\Bx)$ and $\Md\BE(\Bx)$ are
the displacement, flux pair $(\Bu(\Bx),\Bn\cdot\Bd(\Bx))$ and the traction, voltage pair $(\Bn\cdot\BGs,V)$, respectively.

Of course, more than two fields can be coupled together. Thus, by combining a piezoelectric material and a magnetostrictive material 
in a composite we can obtain a material where there is coupling between electric fields, elastic fields, and magnetic fields. 

For fields varying in time at constant frequency $\Go$, with wavelengths and attenuation
lengths much bigger than the size of the body under consideration, the quasistatic equations are applicable. For dielectrics these take 
the same form as \eq{a1}:
\beq  \Bd(\Bx)=\BGve(\Bx)\Be(\Bx),\quad \quad \Div\Bj=0, \quad\quad \Be=-\Grad V=0,
\eeq{a6}
where everything is now complex valued: $\Bd(\Bx)$ and $\Be(\Bx)$ are the complex valued electrical displacement field and electric field, 
$V(\Bx)$ is the complex valued electrical potential, and $\BGve(\Bx)$ is the complex valued electrical permittivity. (The
physical displacement field, electric field, and potential are the real parts of $\Bd(\Bx)e^{-i\Go t}$, $\Be(\Bx)e^{-i\Go t}$,
and $V e^{-i\Go t}$ respectively). Let us set
\beq \Bd=\Bd'+i\Bd'',\quad \Be=\Be'+i\Be'',\quad V=V'+iV'',\quad \BGve=\BGve'+\BGve'', \eeq{a7}
where the primed fields denote the real parts, while the doubled primed fields denote the imaginary parts. Physically, $\BGve''(\Bx)$
is associated with electrical energy loss into heat and is positive semidefinite. Assuming it is positive definite and that an
inverse $[\BGve'']^{-1}$ exists, substitution of \eq{a7} in \eq{a6}, followed by suitable manipulation,
 gives the equivalent coupled field equations of Gibiansky and Cherkaev \cite{Cherkaev:1994:VPC}:
\beq \underbrace{\bpm \Be'' \cr \Bd'' \cr \epm}_{\BJ(\Bx)}=
\underbrace{\bpm
[\BGve'']^{-1} & [\BGve'']^{-1} \BGve' \cr
\BGve'[\BGve'']^{-1} & \BGve''
+ \BGve'[\BGve'']^{-1} \BGve' \epm}_{\BL(\Bx)}\underbrace{\bpm-\Bd' \cr \Be' \cr\epm}_{\BE(\Bx)},
\eeq{a8}
Clearly $\BL$ is real and symmetric and by inspection of the quadratic form associated with $\BL$ one sees that it is positive definite.
Now $\Md\BJ$ consists of the voltage, flux pair $(V'',\Bn\cdot\Bd'')$ while $\Md\BE$ consists of the flux, voltage pair $(-\Bn\cdot\Bd',V')$.
As Gibiansky and Cherkaev show, similar manipulations can be done for viscoelasticity in the quasistatic limit where the equations
have the form \eq{a3}, but with all fields being complex. More generally, the Gibiansky-Cherkaev approach can be applied
to equations where the tensor entering the constitutive law is not self-adjoint, but its self-adjoint part is positive definite,
to an equivalent form where the tensor $\BL(\Bx)$ entering the constitutive law is self-adjoint and positive definite \cite{Milton:1990:CSP}
(see also section 13.4 of \cite{Milton:2002:TOC}): such
manipulations can be applied, for example, to electrical conduction in the presence of a magnetic field where, due to the Hall effect,
the conductivity tensor $\BGs(\Bx)$ entering \eq{a1} is not symmetric.

Wave equations, can be expressed in the form \eq{4} with an identity
like \eq{5} holding. For example, at fixed frequency $\Go$ with a $e^{-i\Go t}$ time dependence, as recognized in \cite{Milton:2009:MVP}
the acoustic equations, with $P(\Bx)$ the (complex) pressure, $\Bv(\Bx)$ the (complex) velocity, $\BGr(\Bx,\Go)$ the effective mass density matrix, and $\Gk(\Bx,\Go)$ the bulk modulus, take the form
\beq \underbrace{\begin{pmatrix}-i\Bv \\ -i\Div\Bv \end{pmatrix}}_{\BJ(\Bx)}
=\underbrace{\begin{pmatrix}-(\Go\BGr)^{-1} & 0 \\ 0 & \Go/\Gk\end{pmatrix}}_{\BL(\Bx)}\underbrace{\begin{pmatrix}\Grad P \\ P\end{pmatrix}}_{\BE(\Bx)},
\eeq{5.A}
(and $\Md\BE$ and $\Md\BJ$ can be identified with the boundary values of $P(\Bx)$ and $\Bn\cdot\Bv(\Bx)$ at $\Md\GO$, respectively).
Here we allow for effective mass density matrices that, at a given frequency, can be anisotropic and complex valued as may be the case in metamaterials 
\cite{Schoenberg:1983:PPS,Willis:1985:NID,Milton:2006:CEM,Milton:2007:MNS}.
Maxwell's equations, with $\Be(\Bx)$ the (complex) electric field, $\Bh(\Bx)$ the (complex) magnetizing field, $\BGm(\Bx,\Go)$ the magnetic permeability,
$\BGve(\Bx)$ the electric permittivity, take the form
\beq \underbrace{\begin{pmatrix}-i\Bh \cr i\Curl\Bh\end{pmatrix}}_{\BJ(\Bx)}
=\underbrace{\begin{pmatrix}-{[\Go\BGm]}^{-1} & 0 \\ 0 & \Go\BGve \end{pmatrix}}_{\BL(\Bx)}
\underbrace{\begin{pmatrix}\Curl\Be \\ \Be\end{pmatrix}}_{\BE(\Bx)},
\eeq{5.B}
(and $\Md\BE$ and $\Md\BJ$ can be identified with the tangential values of $\Be(\Bx)$ and $\Bh(\Bx)$ at $\Md\GO$, respectively).
The linear elastodynamic equations, with $\Bu(\Bx)$ the (complex) displacement, $\BGs(\Bx)$ the (complex) stress, $\BCC(\Bx,\Go)$ the elasticity tensor,
$\BGr(\Bx,\Go)$ the effective mass density matrix,  take the form
\beq \underbrace{\begin{pmatrix} -\BGs/\Go \\ -\Div\BGs/\Go\end{pmatrix}}_{\BJ(\Bx)} 
=\underbrace{\begin{pmatrix}-\BCC/\Go & 0 \\ 0 & \Go\BGr\end{pmatrix}}_{\BL(\Bx)}\underbrace{\begin{pmatrix}[\Grad \Bu+(\Grad \Bu)^T]/2 \\ \Bu\end{pmatrix}}_{\BE(\Bx)},
\eeq{5.C}
(and $\Md\BE$ and $\Md\BJ$ can be identified with the values of $\Bu(\Bx)$ and the traction $\Bn\cdot\BGs(\Bx)$ at $\Md\GO$, respectively).
The preceeding three equations have been written in this form so $\Imag\BL(\Bx)\geq 0$
when $\Imag\Go\geq 0$, where complex frequencies have the physical meaning of the solution increasing exponentially in time.
Under assumptions that the material moduli are lossy, or that 
the frequency $\Go$ is complex with positive imaginary part, one can easily manipulate them into equivalent forms similar to the Gibiansky-Cherkaev form
in \eq{a8} with a positive semidefinite tensor entering the constitutive law \cite{Milton:2009:MVP,Milton:2010:MVP}. Of course, the boundary fields $\Md\BE$ and
$\Md\BJ$ then need to be appropriately redefined.

For thin plates, the dynamic plate equations at constant frequency
can be written in the form
\beq
\underbrace{\begin{pmatrix}
i\BM\\
\nabla\cdot(\nabla\cdot\BM)
\end{pmatrix}}_{\BJ(\Bx)}
=
\underbrace{\begin{pmatrix}
-\BCD(\Bx)/\Go & 0 \\
0 & h(\Bx)\Go\Gr(\Bx)
\end{pmatrix}}_{\BL(\Bx)}
\underbrace{\begin{pmatrix}
\nabla\nabla v
\\
i v
\end{pmatrix}}_{\BE(\Bx)}.
\eeq{*6} 
Here $\BM(\Bx,t)$ is the (complex) bending moment tensor,
$\BCD(\Bx)$ is the fourth-order tensor
of plate rigidity coefficients, $h(\Bx)$ is the plate thickness, $\Gr(\Bx)$ is the density,
and $v=\Md w/\Md t$ is the velocity of the (complex) vertical deflection $w(\Bx,t)$ of the plate.
Note that the matrix $\BL(\Bx)$ has positive definite imaginary part when $\Go$ has positive imaginary part.
$\Md\BE$ can be identified with the boundary values of the pair $(\Grad v,v)$ while
$\Md\BJ$ can be identified with the boundary values of the pair $(\BM\Bn,(\Div\BM)\cdot\Bn)$, in which $\Bn$
is the outwards normal to $\Md\GO$. 
Again, when the material moduli are lossy, or 
the frequency $\Go$ is complex with positive imaginary part, this can be manipulated into the Gibiansky-Cherkaev form
in \eq{a8} with a positive semidefinite tensor entering the constitutive law, and with appropriately redefined boundary fields.

Further examples of wave equations at constant frequency that can be represented in the required form are given 
in the appendix of \cite{Milton:2018:NRF}.
\end{document}

